\definecolor{refkey}{gray}{.75}
\definecolor{labelkey}{gray}{.75}
\newcommand{\C}{\mathbb C}
\newcommand{\Z}{\mathbb Z}
\newcommand{\N}{\mathbb N}
\newcommand{\Zd}{{{\mathbb Z}^d}}
\newcommand{\I}{\mathbf 1}
\newcommand{\eps}{\varepsilon}
\newcommand{\pr}{\mathbb P}
\newtheorem{teo}{Theorem}[section]
\newtheorem{cor}[teo]{Corollary}
\newtheorem{rem}[teo]{Remark}
\newtheorem{defn}[teo]{Definition}
\newtheorem{exmp}[teo]{Example}
\title
{Global survival of branching random walks \\ and 
tree-like branching random walks
}
\author[D.~Bertacchi]{Daniela Bertacchi}
\address{D.~Bertacchi, Dipartimento di Matematica e Applicazioni,
Universit\`a di Milano--Bicocca,
via Cozzi 53, 20125 Milano, Italy.}
\email{daniela.bertacchi\@@unimib.it}
\author[C.~F.~Coletti]{Cristian F.~Coletti}
\address{C.~Coletti, Centro de Matem\'atica, Computa\c{c}\~ao e Cogni\c{c}\~ao, 
Universidade Federal do ABC, Avenida dos Estados, 5001- Bangu - Santo Andr\'e - S\~ao Paulo, Brasil}
\email{cristian.coletti@ufabc.edu.br}
\author[F.~Zucca]{Fabio Zucca}
\address{F.~Zucca, Dipartimento di Matematica,
Politecnico di Milano,
Piazza Leonardo da Vinci 32, 20133 Milano, Italy.}
\email{fabio.zucca\@@polimi.it}
\date{}
\begin{document}

\begin{abstract}
The reproduction speed of a continuous-time branching random walk is proportional to a
positive parameter $\lambda$. There is a threshold for $\lambda$, which is called $\lambda_w$, that
separates almost sure global extinction from global survival. Analogously, there exists another threshold
$\lambda_s$ below which any site is visited almost surely a finite number of times (i.e.~local extinction) while above
it there is a positive probability of visiting every site infinitely many times.
The local critical parameter $\lambda_s$ is completely understood
and can be computed as a function of the reproduction rates. On the other hand,
only for some classes of branching random walks it is known that the global critical
parameter $\lambda_w$ is the inverse of a certain function of the reproduction rates, which we denote by $K_w$.
We provide here new sufficient conditions which guarantee that the global
critical parameter equals $1/K_w$.
This result extends previously known results
for branching random walks on multigraphs and general branching random walks.
We show that these sufficient conditions are satisfied by periodic tree-like branching random walks.
We also discuss the critical parameter and the critical behaviour of continuous-time branching processes
in varying environment. 
So far, only examples where $\lambda_w=1/K_w$ were known; here we
provide an example where $\lambda_w>1/K_w$.
\end{abstract}


\maketitle
\noindent {\bf Keywords}: 
branching random walk, 
branching process, 
local survival, 
global 
survival, 
varying environment,
tree-like,
critical parameters,
generating function.

\noindent {\bf AMS subject classification}: 60J80, 60K35.

\section{Introduction}
\label{sec:intro}

\subsection{Background}

The theory of time-homogeneous branching processes dates back to the work of Galton and Watson
(\cite{cf:GW1875}) and the characterization of survival of these processes is very simple:
the expectation of the offspring distribution must be strictly larger than 1.
One way to add complexity is to study the process on a spatial structure: the individuals
live on 
a set $X$ and randomly reproduce; the offspring are dispersed in $X$ according to a probability
distribution.
If we look at the trajectory of lineages, they can be seen as random walks, which branch whenever
an individual has more than one child, whence
the name Branching Random Walk (briefly, BRW) for the process.
In a BRW survival can be global or local, meaning that with positive probability there
will always be someone alive on the graph (global survival) or on a given vertex (local survival).
Clearly local survival is more restrictive than global survival and both situations
become more likely when individuals get more prolific.
In continuous time an easy way to tune reproductions (and to have markovianity) is
to fix $\lambda>0$ and
to attach to each particle living at $x$ and to some of the couples $(x,y)$ an exponential clock
whose parameter is proportional to $\lambda$.
The same is repeated for all particles, sites and edges.
Whenever the clock rings, the corresponding particle at $x$
(if still alive) places an offspring at $y$.
The probability of global and local survival are nondecreasing functions of $\lambda$.

\subsection{The model}

To be precise, let us define the model. We consider $(X,K)$ where $X$ is a countable (or finite) set and $K=(k_{xy})_{x,y \in X}$ is
a matrix of nonnegative entries
such that $\sum_{y \in X} k_{xy} <+\infty$ for all $x \in X$.
The couple $(X,K)$ identifies the BRW, that is a family of continuous-time processes,
depending on a positive parameter $\lambda$.
With a slight abuse of notation, when there is no ambiguity, we omit the dependence on
$\lambda$ and call BRW also the process with a fixed $\lambda$. 
The state space is $\mathbb N^X$. We denote by $\{\eta_t\}_{t\ge0}$ the realization of the process for a fixed $\lambda$: 
$\eta_t(x)$ is the number of particles alive at time $t$, at site $x\in X$.
The evolution in each vertex $y$ is given by the transition rates
\[
 \begin{split}
  i\to i-1 & \text{ at rate }i,\text{ for all }i\ge1,\\
  i\to i+1 & \text{ at rate }\lambda\sum_{x\in X}k_{xy}\eta_{t}(x),\text{ for all }i\ge0.
 \end{split}
\]
Informally, each individual dies at rate 1 and for each couple of sites $(x,y)$ such that $k_{xy}>0$, 
individuals at $x$ give birth to a new individual
at $y$, at rate $\lambda k_{xy}$. All the Poisson clocks are independent. 
We observe (see \cite[Remark~2.1]{cf:BZ14-SLS}) that the assumption
of a non-constant death rate does not represent a
significant generalization. 

Starting with one particle at time 0, it
is well-known that there exist two critical parameters, $\lambda_w\le\lambda_s$
such that for $0\le\lambda<\lambda_w$ the BRW goes extinct almost surely; for
$\lambda\in(\lambda_w,\lambda_s]$ there is local extinction but global survival;
for $\lambda_s<\lambda$ there is local survival (when $\lambda=\lambda_w$, depending
on the cases, there can be global extinction or global survival). The two parameters
$\lambda_s$ and $\lambda_w$ are called local 
and global critical parameters, respectively.
These parameters in principle depend on the starting vertex (in this case they 
are denoted by $\lambda_s(x)$ and $\lambda_w(x)$), but they
are actually equal for all vertices in the same irreducible class
(for more details, also on the critical cases, see Section \ref{sec:basic}).

\subsection{Literature}
We should mention that under the name BRW one can find, in the literature,
several kinds of processes: for instance processes in discrete time, with no death, where parents
randomly walk either before or after breeding, on continuous space, in random environment or with multiple types
(\cite{cf:Bramson92,cf:denHollMensh,cf:HueLal2000,cf:MachadoMenshikovPopov,cf:MP03}
just to name a few).
At least when one wants to characterize survival and extinction, some of these variants can be treated using
similar techniques. In the present paper, we will refer only to continuous-time BRWs as defined above. 

The characterization of $\lambda_s$ in terms of the matrix $K$ has been known
for quite a while. Indeed, Pemantle and Stacey proved (\cite[Lemma 3.1]{cf:PemStac1}) that if
the infinite matrix $K$ is irreducible and $k_{xy} \in \{0,1\}$, then $\lambda_s=1/M$,
where $M=\lim_{n\to\infty}(k^{(2n)}_{xx})^{1/2n}$ and $k^{(2n)}_{xx}$ is the $(x,x)$ element of the $2n$-th power of the matrix 
$K$.
This result has been extended to irreducible BRWs on multigraphs by
\cite[Theorem 3.1]{cf:BZ} and then to generic BRWs 
by \cite[Theorem 4.1]{cf:BZ2}, where $\lambda_s$ may depend on the starting vertex of the process.
The behaviour at $\lambda=\lambda_s$ is also understood: \cite[Theorem 3.5]{cf:BZ}
and \cite[Theorem 4.7]{cf:BZ2} prove that there is almost sure extinction
in continuous time, in the case of multigraphs and in general, respectively.
The discrete-time case has been described in \cite[Theorem 4.1]{cf:Z1}.
The critical behaviour was also investigated independently, with different
techniques, in \cite{cf:Muller08}.

The characterization of $\lambda_w$ is more challenging and is the main aim of
this paper. If $X=\Zd$ and $K$ is the adjacency matrix of the lattice, 
then $\lambda_w=\lambda_s$:
it is also said that there is no weak phase. The absence of weak phase can
be found in many cases which, like $\Zd$, are nonamenable.
Nonamenability by itself is neither necessary nor sufficient for
$\lambda_w=\lambda_s$, as proven in \cite{cf:PemStac1}.
Nevertheless adding some kind of regularity to the graph, like
quasi-transitivity (see \cite[Theorem 3.1]{cf:Stacey03}) or some
more general regularity (see \cite[Theorem 3.6]{cf:BZ}) turns
nonamenability into an equivalent condition for the absence of weak phase.
The presence of the weak phase was first observed on regular trees $\mathbb T_d$
(where $K$ is its adjacency matrix)
and in that case, $\lambda_w=1/d$ was computed in \cite{cf:MadrasSchi}
(note that vertex transitivity makes $\lambda_w$ easy to determine, since
the total progeny is a Galton-Watson process).
When either the graph or $K$ lack regularity, the characterization of $\lambda_w$
is not obvious. For instance, on Galton-Watson trees, only some bounds for
$\lambda_w$ are known (see \cite{cf:PemStac1, cf:Su2014}).
In \cite{cf:BZ2} the following useful characterization of $\lambda_w$ has been proven
 \begin{equation}\label{eq:characterizationlambdaw}
  \lambda_w(x)=\inf\{\lambda \in \mathbb{R} \colon \exists \mathbf{v} \in l^{\infty}(X), \mathbf{v}(x)>0, \lambda K \mathbf{v} \ge \mathbf{v}\}.
 \end{equation}
Unfortunately this characterization is not explicit, therefore we aim at finding other expressions for $\lambda_w(x)$.


The general characterization 
$\lambda_s(x)=1/\limsup_{n\to\infty}(k^{(n)}_{xx})^{1/n}$ (see \cite[Theorem 4.1]{cf:BZ2})
can be intuitively explained by the fact that the expected value
of the cardinality of the set of $n$-th generation descendants living at $x$
is $\lambda^nk^{(n)}_{xx}$. Then, moving to $\lambda_w$ it would be natural to
conjecture that $\lambda_w=1/\limsup_{n\to\infty}(\sum_{y\in X} k^{(n)}_{xy})^{1/n}$.
The first thing to note is that this conjecture has to be modified since
in \cite[Example 2]{cf:BZ2} we have a BRW where $\lambda_w=1/\liminf_{n\to\infty}
(\sum_{y\in X} k^{(n)}_{xy})^{1/n}$. We denote by $K_w(x)$ the last limit and then look for
conditions guaranteeing that $\lambda_w(x)=1/K_w(x)$.

\subsection{Main results and discussion}

The main theorem of this paper, Theorem~\ref{thm:main}, states that for a generic continuous-time BRW, two
uniformity conditions, (U1) and (U2), together are sufficient for $\lambda_w=1/K_w$
(see Section~\ref{sec:main} for the definition of these conditions).
We mention here that an adjacency matrix always satisfies (U2) and,
in the case of multigraphs with this choice of $K$, it
was already known that (U1) was a sufficient condition  for $\lambda_w=1/K_w$
(\cite[Theorem 3.2]{cf:BZ}).
So far, in the general case, it was proven that $\lambda_w=1/K_w$
is also true for BRWs which can be projected onto finite spaces, namely the $\mathcal F$-BRWs (\cite[Proposition 4.5]{cf:BZ2}).
Theorem~\ref{thm:main} extends this result, since $\mathcal F$-BRWs satisfy the two
conditions (U1) and (U2), while there are examples of BRWs satisfying the uniformity
conditions without being $\mathcal F$-BRWs (for instance, a periodic tree-like BRW).
Moreover, a uniformity request, more restrictive than (U1), had
proven to be sufficient for $\lambda_w=1/K_w$ (\cite[Proposition 4.6]{cf:BZ2}).

The novelty of our result is that we extend the characterization of $\lambda_w$ from BRWs on multigraphs and $\mathcal{F}$-BRWs
to a more general class
of BRWs.
The proof, in this case, requires a completely new
and different technique, which heavily relies on multidimensional generating functions
and their fixed points (generating function techniques  have proven to be excellent tools in the identification of the extinction
probabilities of a BRW, see for instance \cite{cf:BZ2,cf:Hautp12,cf:Hautp13}).

Theorem~\ref{thm:main} provides 
a characterization for $\lambda_w$ which is in the same spirit 
of the general one known for $\lambda_s$. 
Examples~\ref{exmp:treeandZ}, \ref{exmp:star} and \ref{exmp:backtotheorigin} show that
the conditions of our main theorem are not necessary.
This leads to the second noticeable contribution of our paper. So far, in the literature, only examples
where $\lambda_w=1/K_w$ were known. It is then natural to ask whether this characterization holds for generic
BRWs or not. The answer is negative even in the case of irreducible BRWs.
Indeed in Example~\ref{exmp:finally}, we construct an irreducible BRW where $\lambda_w >1/K_w$;
moreover, we also show that there are reducible BRWs where
 $\lambda_w(x) >1/K_w(x)$ and $\lambda_w(y)=1/K_w(y)$ for some $x,y \in X$ (even though $\lambda_w(x)=\lambda_w(y)$ for all $x,y \in X$). 
Hence, even though general characterizations for $\lambda_w(x)$ in terms of functional inequalities are known (see 
equation~\eqref{eq:characterizationlambdaw}),
the search for an explicit expression, similar to the one available for $\lambda_s(x)$, in the case of $\lambda_w(x)$ is still open.

\subsection{Outline}
Here is an outline of the paper. In Section~\ref{sec:basic} we formally define the process, its local and global survival
and the associated critical parameters. We also introduce the generating function $G$ of the BRW and recall
Theorem~\ref{th:survival} which links global survival with some properties of $G$.
In Section~\ref{sec:main} we first prove Theorem~\ref{thm:main}~and then Theorem~\ref{th:mapping},
which gives a sufficient condition for
the uniformity condition (U1) to hold.
Section~\ref{sec:examples} is devoted to examples where the equality $\lambda_w=1/K_w$ holds.
The first example is given by periodic tree-like BRWs, which we define in this paper, much in the spirit
of \cite{cf:spakulova}. In particular they are a family of
self-similar BRWs, which can be neither quasi-transitive nor $\mathcal F$-BRWs.
To figure an idea of the self-similarity we require, one can think of BRWs on tree-like graphs.
Tree-like structures arise naturally in the context
of complex networks. In particular, many social and information networks present a
large-scale tree-like structure or a hierarchical structure (see
\cite{cf:Adcock}, \cite{cf:Chen} and references therein).
The global survival
of this family of processes could not be treated with the previously known
techniques. The second example is given by continuous-time branching processes in varying
environment: namely branching processes where individuals breed accordingly to a Poisson process whose
parameter depends on the generation. It suffices to interpret generations as space variables
and BRW techniques apply. We also show that, even for such a particular law of the process,
when $\lambda=\lambda_w$ still global extinction and global survival are both possible.
Examples~\ref{exmp:treeandZ} and \ref{exmp:star} show that (U1) is not necessary
for $\lambda_w=1/K_w$ (in the first case $\lambda_w<\lambda_s$, in the second case $\lambda_w=\lambda_s$).
Example~\ref{exmp:backtotheorigin} shows that (U2) is not necessary
for $\lambda_w=1/K_w$. In Example~\ref{exmp:finally} we construct an irreducible BRW where $\lambda_w>1/K_w$. It is
worth mentioning that, in the reducible case, it can even happen that
$\lambda_w=1/K_w$ if the process starts from certain vertices and
$\lambda_w>1/K_w$ if it starts from other vertices (see Example~\ref{exmp:finally}).
Section~\ref{subsec:randomgraphs} is devoted to a brief discussion on the case of random graphs.


\section{Basic definitions} \label{sec:basic}

In this section we recall the main tools and definitions which are needed in the sequel.
In Subsection~\ref{subsec:BRW-surv} we give the definition of reducible/irreducible BRW and
a name to the extinction probabilities. We define formally the critical parameters, noting that they depend
on the irreducible class of the starting vertex. In Subsection~\ref{subsec:geompar} we introduce
the parameter $K_s(x,x)$, which is the reciprocal of $\lambda_s(x)$, and $K_w(x)$, which is
our candidate for the reciprocal of $\lambda_w(x)$. We also discuss the dependence of these parameters
on the site $x$.
Subsection~\ref{subsec:genfun} presents the generating function $G$ of the BRW.
This generating function is the analog of the one in the case of the Galton-Watson process.
The extinction probabilities are fixed points of $G$. Theorem~\ref{th:survival} gives characterizations
of global survival in terms of properties of this generating function.
In Subsection~\ref{subsec:projec} we recall the definition of projection of the BRW $(X,K)$ onto a
BRW $(Y,\widetilde K)$. Projections are useful since the two BRWs share the same $\lambda_w$
and in some cases it is easier to compute the critical parameter in $(Y,\widetilde K)$.
In particular, if $Y$ is finite (that is, $(X,K)$ is a $\mathcal F$-BRW), then global and local survival are equivalent in $(Y,\widetilde K)$
and $\lambda_w$ can be determined by computing $\lambda_s$ (for which the explicit formula is known). 

\subsection{Extinction probabilities and critical parameters}
\label{subsec:BRW-surv}

Given a BRW $(X,K)$, the probability of survival depends on the initial configuration $\eta_0\in \N^X$.
We denote by $\{\eta_t\}_{t \ge 0}$ the process (with a fixed $\lambda$) and
we consider $\eta_0=\delta_x$ (where $x\in X$). In this case, we say that the process \textit{starts at $x$}.

Depending on which elements of the matrix $K$ are strictly positive, the BRW starting at $x$ may or may not be able to reach 
a fixed vertex $y\in X$. To understand which sites are reachable, we associate to the process
a graph $(X,E_K)$ where $(x,y) \in E_K$  if and only if $k_{xy}>0$.
We say that there is a path from $x$ to $y$, and we write $x \to y$, if it is
possible to find a finite sequence $\{x_i\}_{i=0}^n$, $n \in \N$,
such that $x_0=x$, $x_n=y$ and $(x_i,x_{i+1}) \in E_K$,
for all $i=0, \ldots, n-1$. If $x \to y$ and $y \to x$ we write $x \rightleftharpoons y$.
By definition there is always a path of length $0$ from $x$ to $x$. The equivalence 
class $[x]$ 
with respect to $\rightleftharpoons$ is called \textit{irreducible class of $x$}.
We say that the matrix $K$
 and the BRW $(X,K)$ are \textit{irreducible} 
if and only if the graph $(X,E_K)$ is \textit{connected},
otherwise we call it \textit{reducible}.
The irreducibility of $K$ means that, in the BRW, the progeny of any particle can 
spread to any site of the graph.

Let $\pr^x$ be the law of the process which starts at $x$. We define the local and global survival events and
the associated extinction probabilities. Note that, in the literature, local and global survival are sometimes called 
by \textit{strong} and \textit{weak} survival.
\begin{defn}\label{def:survival} $\ $
\begin{enumerate}
 \item 
The process \textsl{survives locally} in $A \subseteq X$, starting from $x \in X$, if\\
$
{\mathbf{q}}(x,A):=1-\pr^x(\limsup_{t \to \infty} \sum_{y \in A} \eta_t(y)>0)<1.
$
\item
The process \textsl{survives globally}, starting from $x$, if
$
\bar {\mathbf{q}}(x):={\mathbf{q}}(x,X) 
<1.
$
\end{enumerate}
\end{defn}
\noindent
From now on, ${\mathbf{q}}(x,y)$ will be a
shorthand for ${\mathbf{q}}(x,\{y\})$. 
Often we will simply say that local survival occurs ``starting from $x$'' or ``at $x$'':
in this case we mean that $\mathbf{q}(x,x)<1$. When there is no survival, we say that there is extinction
and the fact that extinction occurs with probability 1 will be tacitly understood.

Depending on $x \in X$, two critical parameters are associated to the
continuous-time BRW: the \textit{global} 
\textit{survival critical parameter} $\lambda_w(x)$ and the  \textit{local} 
 \textit{survival critical parameter} $\lambda_s(x)$.
They are defined as
\[ 
\begin{split}
\lambda_w(x)&\equiv \lambda_w(x;X,K):=\inf \Big \{\lambda>0\colon \,
\pr^x\Big (\sum_{w \in X} \eta_t(w)>0, \forall t\Big) >0 \Big \},\\
\lambda_s(x)&\equiv \lambda_s(x;X,K):=
\inf\{\lambda>0\colon \,
\pr^x \big(\limsup_{t \to \infty} \eta_t(x)>0 \big) >0
\}.
  \end{split}
\] 
By definition, we have:
for $\lambda < \lambda_w(x)$ almost sure global extinction;
for $\lambda\in (\lambda_w(x),\lambda_s(x)]$ global survival
with positive probability and almost sure local extinction;
for $\lambda> \lambda_s(x)$ global and local survival with positive
probability.
When $\lambda=\lambda_w(x)$ there might be global survival (as in \cite[Example 3]{cf:BZ2}) 
or global extinction (as in the case of
$\mathcal{F}$-BRWs, see Section~\ref{subsec:projec} for details).
The critical parameters depend only on $[x]$: in the irreducible
case we will write $\lambda_s$ and $\lambda_w$ instead of
 $\lambda_s(x)$ and $\lambda_w(x)$ respectively.

 \subsection{Geometrical parameters}
 \label{subsec:geompar}
We define recursively
$k^{(n)}_{xy}:=\sum_{w\in X} k^{(n-1)}_{x w} k_{w y}$
(where $k^{(0)}_{xy}:=\delta_{xy}$); moreover we set
$T^n_x:=\sum_{y \in X} k^{(n)}_{xy}$ and
$\phi^{(n)}_{xy}:=\sum_{x_1,\ldots,x_{n-1} \in X \setminus\{y\}} k_{x x_1} k_{x_1 x_2} \cdots k_{x_{n-1} y}$;
by definition
$\phi^0_{xy}:=0$ for all $x,y \in X$.
Clearly $\lambda^n k^{(n)}_{xy}$ is the average size of the $n$th generation at $y$ of the progeny 
of a particle living at $x$;
$\lambda^n T^n_x$ is the average size of the $n$th generation of the whole progeny 
of a particle living at $x$.
Finally, 
$\lambda^n \phi^{(n)}_{xy}$ is the analog of $k^{(n)}_{xy}$ concerning only paths reaching
$y$ for the first time at the $n$-th step.


We introduce the following
geometrical parameters
\[
K_s(x,y) \equiv K_s(x,y;X,K):=\limsup_{n} (k^{(n)}_{xy})^{1/n}, \qquad
K_w(x)\equiv K_w(x;X,K):=\liminf_{n} (T_x^n)^{1/n}.
\]
In the rest of the paper, whenever there is no ambiguity, we will omit
the dependence on $X$ and $K$.
We recall that $\lambda_s(x)=1/K_s(x,x)$ (\cite[Theorem 3]{cf:BZ2}).
Supermultiplicative arguments imply that
$K_s(x,x) =\lim_{n\to\infty} (k^{(dn)}_{xx})^{1/dn}$ for some $d \in \N$, whence
we have that $K_s(x,x) \le K_w(x)$, for all $x \in X$.

$K_w(x)$ and $K_s(x,y)$ depend only on the irreducible classes $[x]$ and $[y]$.
For an irreducible BRW, we write 
$K_w:=K_w(x,y)$ and $K_s:=K_s(x)$ for all $x,y \in X$.
 Not only $\lambda_s(x)$ and $K_s(x)$ are constant inside each irreducible class, but they
 also depend only on the restriction of the BRW to the irreducible class $[x]$ (that is, they are the same if computed
 for the original BRW or for its restriction to $[x]$).
 This is due to the fact that local survival takes into account paths starting from $x$ and going back to $x$.
 That might not be true for $\lambda_w(x)$ and $K_w(x)$, since when we restrict the BRW to $[x]$ we
 might lose paths from $x$ which exit $[x]$ (in general $\lambda_w(x)$ of the restricted BRW is not smaller
 than the corresponding parameter for the original BRW and the reversed inequality holds for $K_w(x)$).
\begin{rem}\label{rem:lambdaw}
In general, nothing can be said about the relationship between $\lambda_s(x)$ and $\lambda_s(y)$
 for $[x]\neq[y]$. On the contrary, considering global survival, if $x \to y$ then $\lambda_w(x) \le \lambda_w(y)$ (and $K_w(x) \ge K_w(y)$).
 One may wonder under which conditions this inequality may be reversed.
 Given $A\subseteq X$, if we know that the restriction of the BRW to $X\setminus A$ dies out for all $\lambda < \inf\{\lambda_w(y) \colon
 y \in A\}$, then $\lambda_w(x) \ge \inf\{\lambda_w(y) \colon
 y \in A\}$ for all $x \in X$. The arguments are similar to those used in the comparison between a BRW and the associated no-death BRW as in
 \cite[before Proposition 2.1]{cf:BZ14-SLS}
 or \cite[Section 3.2]{cf:BZ4}. Applications can be found in Section~\ref{subsec:BPVE}.
\end{rem}


The following power series can be useful to identify the critical parameters
\[
\begin{split}
H(x,y|\lambda)&:=\sum_{n =0}^\infty k^{(n)}_{xy} \lambda^n,
\qquad \Theta(x|\lambda):=\sum_{n =0}^\infty T_x^n \lambda^n,
\qquad \Phi(x,y|\lambda):=\sum_{n =1}^\infty \phi_{xy}^{(n)} \lambda^n.
\end{split}
\]
Clearly $1/K_s(x,y)$ is the convergence radius of $H(x,y|\lambda)$ and
for all $\lambda \in \C$ such that $|\lambda|<1/\limsup_{n} (T_x^n)^{1/n}$ we have
$\Theta(x|\lambda)=\sum_{y \in Y} H(x,y|\lambda)$. 
The following relations hold (provided that $\lambda$ is such that the involved series converge):
\begin{equation}\label{eq:HTheta}
\begin{split}
H(x,y|\lambda)&=\delta_{xy}+\lambda \sum_{w \in X} k_{xw}
H(w,y|\lambda)\\
& =
\delta_{xy}+\lambda \sum_{w \in X} H(x,w|\lambda)k_{wy}\\
& =\delta_{xy} +\Phi(x,y|\lambda)H(y,y|\lambda),
\\
\Theta(x|\lambda)&=1+\lambda \sum_{w\in X} k_{xw} \Theta(w|\lambda),
\\
\qquad \Phi(x,x|\lambda)&= \lambda \sum_{y \in X, y \not = x} k_{xy} \Phi(y,x|\lambda)+
\lambda k_{xx}.
\end{split}
\end{equation}
Moreover
if $x,y,w \in X$ are distinct vertices such that every path from $x$ to $y$ contains $w$ then
$\Phi(x,y|\lambda)=\Phi(x,w|\lambda)\Phi(w,y|\lambda)$.
We note that, since 
\begin{equation}
H(x,x|\lambda)=\frac{1}{1-\Phi(x,x|\lambda)},
\qquad \forall \lambda \in \C: |\lambda|< \lambda_s(x), 
\end{equation}
we have that $\lambda_s(x)=\max\{ \lambda \geq 0 :\Phi(x,x|\lambda)\leq 1\}$
for all $x \in X$ (remember that $\Phi(x,x|\cdot)$ is left-continuous on $[0,\lambda_s(x)]$
and that $1/(1-\Phi(x,x|\lambda))$ has no analytic prolongation in $\lambda_s(x)$).

%
%

\subsection{The generating function of the BRW}
\label{subsec:genfun}

To each continuous-time BRW one can associate its \textit{discrete-time counterpart}, that is,
a discrete-time BRW which survives/dies if and only if the original BRW
does (see for instance \cite[Section 2.2]{cf:Z1} or 
\cite[Section 2.2 and Remark 2.1]{cf:BZ14-SLS}). 
In this sense the class of continuous-time BRWs can be considered as a subclass of discrete-time BRWs
and we can study its generating function. 
More precisely, let us denote by $\mu_x(f)$ the probability that a particle living at $x$ places exactly $f(y)$
offsprings at site $y$, before its death.
The generating function $G:[0,1]^X \to [0,1]^X$ has $x$ coordinate given by
\begin{equation}
G(\mathbf{z}|x):= \sum_{f \in \Psi} \mu_x(f) \prod_{y \in X} z(y)^{f(y)},
\end{equation}
where $\Psi$ is the space of finitely supported functions in $\N^X$.
This generating function 
has been introduced in \cite[Section 3]{cf:BZ2}
(see also \cite{cf:BZ4, cf:BZ16, cf:Z1} for additional properties).
In the case of the discrete-time counterpart of a continuous-time BRW, 
the $x$-coordinate of $G(\mathbf{z})$ can be written as
\[
 G(\mathbf{z}|x):=\frac{1}{1+\lambda K(\mathbf{1}-{\mathbf{z}})(x)},
 \]
where $\mathbf{1}(x)=1$ 
$K{\mathbf{z}}(x)=\sum_{y \in X} k_{xy}{\mathbf{z}}(y)$ for all $\mathbf{z} \in [0,1]^X$ and $x \in X$.
Note that $G$ is continuous with respect to the \textit{pointwise convergence topology} of $[0,1]^X$  and nondecreasing
with respect to the usual partial order of $[0,1]^X$ (see \cite[Sections 2 and 3]{cf:BZ2} for further details).
As for this partial order, if we say that an element of $[0,1]^X$ is the smallest (respectively largest) among a set of points 
$\mathcal{A}$, we also imply that it is comparable with every element of the specific set $\mathcal{A}$. 
We stress that $\mathbf{z} < \mathbf{w}$ means
$\mathbf{z}(x) \le \mathbf{w}(x)$ for all $x \in X$ and $\mathbf{z}(x_0) < \mathbf{w}(x_0)$ for some $x_0 \in X$.
Moreover, $G$ represents the 1-step reproductions; we denote by $G^{(n)}$ the generating function
associated to the $n$-step reproductions, which is inductively defined as $G^{(n+1)}({\mathbf{z}})=G^{(n)}(G({\mathbf{z}}))$
($G^{(0)}$ is the identity).
The extinction probabilities are fixed points
of $G$ and the smallest fixed point is $\bar {\mathbf{q}}=\lim_{n\to\infty}G^{(n)}(\mathbf{0})$.
Note that if $G(\mathbf{z}) \le \mathbf{z}$, then $\mathbf z \ge \bar {\mathbf{q}}$.

Global survival can be characterized using $G$, as the following theorem claims
(for the proof see \cite[Theorem 4.1]{cf:Z1} and \cite[Theorem 3.1]{cf:BZ14-SLS},
or \cite[Theorem 2.2]{cf:BRZ16}).

\begin{teo}\label{th:survival}
 Consider a BRW and a fixed $x \in X$. The following assertions are equivalent:
 \begin{enumerate}
  \item $\bar {\mathbf{q}}(x)<1$ (i.e.~there is global survival starting from $x$);
  \item there exists
${\mathbf{q}}\in [0,1]^X$ such that ${\mathbf{q}}(x)<1$ and $G(\mathbf{q}) \le \mathbf{q}$;
  \item there exists
${\mathbf{q}}\in [0,1]^X$ such that ${\mathbf{q}}(x)<1$ and $G(\mathbf{q}) = \mathbf{q}$.
 \end{enumerate}
 If $\mathbf{q}$ satisfies either $(2)$ or $(3)$, then $\mathbf{q} \ge \bar {\mathbf{q}}$.
 Moreover, global survival starting from $x$ implies that $\liminf_{n \to \infty} \lambda^n \sum_{y \in X} k_{xy}^{(n)}>0$
 (or, equivalently,$\inf_{n \to \infty} \lambda^n \sum_{y \in X} k_{xy}^{(n)}>0$). 
 \end{teo}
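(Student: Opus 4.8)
The plan is to prove Theorem~\ref{th:survival} by combining the fixed-point characterization of global survival for discrete-time BRWs with the explicit form of the generating function $G$ in the continuous-time case.

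\textbf{Equivalence of (1), (2), (3).} The implication $(3)\Rightarrow(2)$ is trivial, and $(2)\Rightarrow(1)$ follows from the general fact recalled just before the statement: if $G(\mathbf{z})\le\mathbf{z}$ then $\mathbf{z}\ge\bar{\mathbf{q}}=\lim_n G^{(n)}(\mathbf{0})$, so $\bar{\mathbf{q}}(x)\le\mathbf{q}(x)<1$. For $(1)\Rightarrow(3)$ one simply takes $\mathbf{q}=\bar{\mathbf{q}}$, which is a fixed point of $G$ by continuity of $G$ with respect to pointwise convergence, and $\bar{\mathbf{q}}(x)<1$ by hypothesis. The final assertion that any $\mathbf{q}$ satisfying (2) or (3) dominates $\bar{\mathbf{q}}$ is again the monotonicity argument $G(\mathbf{q})\le\mathbf{q}\Rightarrow\mathbf{q}\ge\bar{\mathbf{q}}$. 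The bulk of the work here is really bookkeeping; these facts are all proven in the cited references (\cite[Theorem 4.1]{cf:Z1}, \cite[Theorem 3.1]{cf:BZ14-SLS}, \cite[Theorem 2.2]{cf:BRZ16}), so in the paper itself one would cite them and perhaps sketch the $G(\mathbf z)\le\mathbf z\Rightarrow \mathbf z\ge\bar{\mathbf q}$ step.

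\textbf{The moment bound.} The substantive new piece is the last sentence: global survival starting from $x$ forces $\liminf_{n\to\infty}\lambda^n T_x^n>0$ (equivalently $\inf_n \lambda^n T_x^n>0$). I would argue by contrapositive. Suppose $\liminf_n \lambda^n T_x^n=0$; I want to conclude $\bar{\mathbf q}(x)=1$. Write $m^{(n)}_x:=\lambda^n T_x^n=\lambda^n\sum_{y\in X}k^{(n)}_{xy}$, the expected size of the $n$-th generation of the total progeny of a particle at $x$. By the standard first-moment (Markov) inequality applied to the discrete-time counterpart, $\pr^x(\text{generation }n\text{ is nonempty})\le \E^x[\#\{\text{particles in generation }n\}]=m^{(n)}_x$. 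If $\liminf_n m^{(n)}_x=0$, there is a subsequence $n_k$ along which $m^{(n_k)}_x\to0$, hence $\pr^x(\text{generation }n_k\neq\varnothing)\to0$; since the event ``the BRW survives globally'' is contained in ``generation $n$ is nonempty for every $n$'', its probability is at most $\inf_k \pr^x(\text{gen }n_k\neq\varnothing)=0$. Therefore $\bar{\mathbf q}(x)=1$. The equivalence of $\liminf_n m^{(n)}_x>0$ with $\inf_n m^{(n)}_x>0$ is because $m^{(0)}_x=1>0$ and each $m^{(n)}_x$ is finite (using $\sum_y k_{xy}<\infty$ and submultiplicativity $T_x^{n+m}\le$ ... along reachable vertices), so the infimum over the finite initial segment is positive and only the tail behaviour, i.e.\ the $\liminf$, matters.

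\textbf{Main obstacle.} The only genuinely delicate point is the passage from the continuous-time process to its discrete-time counterpart and making sure the first-moment computation $\E^x[\#\text{gen }n]=\lambda^n T_x^n$ is the right quantity — one must check that the discrete-time counterpart is constructed so that its $n$-th generation expectations are exactly $\lambda^n k^{(n)}_{xy}$, which follows from the form $G(\mathbf z\,|\,x)=1/(1+\lambda K(\mathbf 1-\mathbf z)(x))$ since the mean matrix of $G$ is $\partial_{z(y)}G(\mathbf 1\,|\,x)=\lambda k_{xy}$. Granting that (and it is recalled in Subsection~\ref{subsec:geompar} that $\lambda^n T_x^n$ is ``the average size of the $n$th generation of the whole progeny''), everything else is a short Borel--Cantelli-free argument using only monotone convergence along the containment of survival events. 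I would present the equivalence part first (citing the literature) and then give the moment bound as the one self-contained paragraph.
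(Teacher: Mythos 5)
Your proof is correct and takes essentially the route the paper relies on: the paper gives no proof of Theorem~\ref{th:survival}, deferring to \cite[Theorem 4.1]{cf:Z1}, \cite[Theorem 3.1]{cf:BZ14-SLS} and \cite[Theorem 2.2]{cf:BRZ16}, and your two ingredients --- the monotonicity/fixed-point argument ($G(\mathbf{q})\le\mathbf{q}\Rightarrow\mathbf{q}\ge\bar{\mathbf{q}}$, plus $\bar{\mathbf{q}}$ being the smallest fixed point) for the equivalence of (1)--(3), and the first-moment bound $\pr^x(\text{generation }n\neq\emptyset)\le\lambda^n T_x^n$ applied along a subsequence realizing the $\liminf$ of the discrete-time counterpart --- are exactly the standard arguments behind those results. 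One small repair: your justification of ``$\liminf>0$ iff $\inf>0$'' appeals to finiteness of $\lambda^n T_x^n$, which is neither needed nor guaranteed (no uniform bound on $\sum_y k_{xy}$ is assumed); the correct observation is that $T_x^n=0$ forces $T_x^m=0$ for all $m\ge n$, so $\liminf_n\lambda^n T_x^n>0$ already implies every term is strictly positive, and then the minimum over the finitely many initial terms is positive, giving $\inf_n\lambda^n T_x^n>0$.
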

 As a consequence of this theorem we have that $\lambda_w(x) \ge 1/K_w(x)$; indeed if $\lambda < 1/K_w(x)$, then
 $\liminf_{n \to \infty} \lambda \sqrt[n]{\sum_{y \in X} k_{xy}^{(n)}}<1$, hence $\liminf_{n \to \infty} \lambda^n \sum_{y \in X} k_{xy}^{(n)}=0$.
 This immediately implies that if $K_w(x)=0$ then there is extinction for every $\lambda>0$, whence $\lambda_w(x)=+\infty$.
 We observe that
\begin{equation}\label{eq:comparison}
 1/K_w(x) \le \lambda_w(x) \le \lambda_s(x)=1/K_s(x,x), \quad \forall x \in X,
\end{equation}
thus if $K_s(x,x)=K_w(x)$ then $\lambda_s(x) = \lambda_w(x) = 1/K_w(x)$
(see Theorem~\ref{apgs} for an application).

\subsection{Projections}
\label{subsec:projec}
The definition of \textit{projection of a BRW} 
first appeared in \cite{cf:BZ} for multigraphs, in \cite{cf:BZ2} for continuous-time BRWs and
\cite{cf:Z1} for generic discrete-time BRWs (in these papers it was called \textit{local isomorphism}).

\begin{defn}\label{def:projection}
A projection of a BRW $(X,K)$ onto $(Y,\widetilde K)$ is a surjective map $g:X \to Y$, such that
$\sum_{z \in g^{-1}(y)} k_{xz}=\widetilde k_{g(x)y}$
for all $x \in X$ and $y \in Y$.
If there exists a projection of $(X,K)$ onto a finite $(Y,\widetilde K)$, then $(X,K)$ is called $\mathcal{F}$-BRW.
\end{defn}
The main idea is to label the points in $X$ by using the alphabet $Y$.
Particles at $x$ generate children in the set
of vertices with ``label'' $y$, at a total rate which depends only on 
$y$ and $g(x)$.
If $\{\eta_t\}_{t \ge 0}$ is a realization of the BRW $(X, K)$, then $\{\sum_{z\in g^{-1}(\cdot)}\eta_t(z)\}_{t \ge 0}$ is a realization
of the BRW $(Y, \widetilde K)$. 

In particular, 
there is global survival for $(X,K)$, starting from $x$, if and only if
there is global survival for $(Y, \widetilde K)$, starting from $g(x)$. 
This implies that
$\lambda_w(x;X,K)=\lambda_w(g(x);Y,\widetilde K)$, for all $x \in X$ (see for instance \cite[proof of Proposition 4.5]{cf:BZ2} 
or \cite[before Theorem 4.3]{cf:Z1}).
On the other hand, $K_w(x;X,K)=K_w(g(x);Y,\widetilde K)$, for all $x \in X$. Indeed it is easy to prove, by induction on $n$, that 
$\sum_{z \in X} k^{(n)}_{xz}=\sum_{y \in Y} \widetilde k^{(n)}_{g(x)y}$, for all $n \in \N$, $x \in X$.
This also implies that
$\lim_{n \to \infty} \sqrt[n]{\sum_{z \in X} k^{(n)}_{xz}}$ exists if and only if $\lim_{n \to \infty} 
\sqrt[n]{\sum_{y \in Y} \widetilde k^{(n)}_{g(x)y}}$ does.  
It is worth mentioning that, when $(X,K)$ is an $\mathcal{F}$-BRW, then
$\lambda_w(x)=1/K_w(x)$ and there is almost sure global extinction 
for $\lambda=\lambda_w(x)$ (see for instance \cite{cf:BZ, cf:BZ2, cf:BZ4, cf:Z1}).
%

We observe that $(x,y) \in E_K$ implies $(g(x), g(y)) \in E_{\widetilde K}$
 but the converse is not true. In particular,
if $(X,K)$ is irreducible, then $(Y, \widetilde K)$ is irreducible as well, but the converse is not true in general.
If $(X,K)$ is projected onto $(Y, \widetilde K)$ then, for all ${\mathbf{q}} \in [0,1]^Y$ and $x \in X$,
\begin{equation} 
G_X({\mathbf{q}} \circ g|x)=G_Y({\mathbf{q}}|g(x)),
\end{equation}
that is
\[
 \frac{1}{1+\lambda K(\mathbf{1}_X-{\mathbf{q}\circ g})(x)}
 =
 \frac{1}{1+\lambda \widetilde K(\mathbf{1}_Y-{\mathbf{q}})(g(x))},
\]
where $\mathbf{1}_X(x)=\mathbf{1}_Y(y):=1$ for all $x \in X$ and $y \in Y$.
Moreover the following relation between the probabilities of extinctions hold: $\bar {\mathbf{q}}_X= \bar {\mathbf{q}}_Y \circ g$.

\section{Main results}\label{sec:main}

We know that $\lambda_w=1/K_w$
holds in many cases, like finite BRWs or $\mathcal F$-BRWs (but not in general, according to Example~\ref{exmp:finally}).
It is natural to define two uniformity conditions: (U1) is related to how fast the expected number of descendants at generation $n$
gets close to $(\lambda K_w)^n$; (U2) puts an infimum on the positive reproduction rates.
\begin{defn}\label{def:nxcontinuous}
Given a BRW $(X,K)$ and given $\varepsilon>0$, $x\in X$, 
we define $N_{x,\eps}:=\{n\in\N\colon \sum_{y\in X}k_{xy}^{(n)} \ge \left(K_w(x)-\varepsilon\right)^n\}$
and $n_x(\varepsilon):=\min N_{x,\eps}$.
We say that 
\begin{enumerate}[(1)]
\item condition (U1) is satisfied if for all $\eps>0$, $\sup_{x\in X}n_x(\eps)<+\infty$;
 \item condition (U2) is satisfied
 if $\inf\{k_{xy}\colon x,y \text{ such that }k_{xy}>0\}>0$.
 \end{enumerate}
 \end{defn}
Our main result, Theorem~\ref{thm:main}, claims that for irreducible BRWs, if (U1) and (U2) hold, then $\lambda_w(x)=1/K_w(x)$.
Let us give here an informal sketch of the ideas that we employ here. Start the process with one particle at $x$.
By definition of $K_w$, there exists $n_x\in\mathbb N$, such that the expected number of descendants in generation $n_x$ is
close to $(\lambda K_w(x))^{n_x}$. In particular this expected number is larger than 1 for $\lambda>1/K_w$.
We consider a discrete-time BRW, $(X,\widehat K)$, whose offspring are, at generation 1, the 
descendants that $(X,K)$ has in generation $n_x$. We prove survival of $(X,\widehat K)$ by showing
that for its generating function $\overline G$ there exists $s>0$ such that $\overline G(s\mathbf 1_X|x)\le s$ for all $x\in X$.

Corollary~\ref{cor:main} weakens the conditions, since it proves that, in order to have $\lambda_w=1/K_w$,
it suffices that $(X,K)$ can be projected onto
a BRW which satisfies the assumptions of Theorem~\ref{thm:main} (an example of a BRW where the corollary
applies is described after its proof).
We note that it can be tricky to check whether (U1) is satisfied.
Theorem~\ref{th:mapping} provides a sufficient geometrical condition:
the existence of a subset of vertices $Y$ which are not too far from any other vertex in $X$,
and of a family of maps which ``preserve'' the reproduction rates and send a fixed vertex in $X$ onto any chosen vertex in $Y$. 
An  application of this theorem, 
is given in Section~\ref{subsec:treelike} 
where we study the global survival critical parameter of periodic tree-like BRWs.

\begin{teo}\label{thm:main}
Let $(X,K)$ be an irreducible, continuous-time BRW such that $\sup_{x \in X} \sum_{y \in X} k_{xy} <+\infty$.
 If conditions (U1) and (U2) hold  then $\lambda_w=1/K_w$.
\end{teo}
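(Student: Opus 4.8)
The plan is to reduce the problem to showing that a suitably chosen auxiliary discrete-time BRW survives, by exhibiting a subfixed point of its generating function bounded away from~$\mathbf{1}_X$. We already know from~\eqref{eq:comparison} that $\lambda_w(x)\ge 1/K_w(x)$, so the whole task is the reverse inequality: for every $\lambda>1/K_w$ we must produce global survival starting from any (hence every, by irreducibility) vertex $x$. Fix such a $\lambda$ and pick $\varepsilon>0$ small enough that $\lambda(K_w-\varepsilon)>1$. By condition (U1) there is a finite $N:=\sup_{x\in X}n_x(\varepsilon)<+\infty$; by enlarging $N$ if necessary (using that $K_w(x)=K_w$ is constant on the irreducible class and that $(k^{(n)}_{xy})$ behaves supermultiplicatively along paths) I would arrange that in fact $\sum_{y\in X}k^{(N)}_{xy}\ge(K_w-\varepsilon)^N$ for a \emph{common} exponent $N$ and all $x$. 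Then define the discrete-time BRW $(X,\widehat K)$ by $\widehat k_{xy}:=\lambda^N k^{(N)}_{xy}$, i.e.\ one step of $\widehat K$ is $N$ steps of the original continuous-time BRW (more precisely, of its discrete-time counterpart run $N$ times). Because global survival of $(X,K)$ at rate $\lambda$ is equivalent to global survival of its discrete-time counterpart, and iterating the counterpart $N$ times clearly survives iff the counterpart survives, it suffices to prove that $(X,\widehat K)$ survives globally from every~$x$.

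For the latter I would invoke Theorem~\ref{th:survival}: it is enough to find $\mathbf{q}\in[0,1]^X$ with $\mathbf{q}(x)<1$ for some $x$ and $\overline G(\mathbf{q})\le\mathbf{q}$, where $\overline G$ is the generating function of $(X,\widehat K)$. The natural candidate is a \emph{constant} vector $\mathbf{q}=s\mathbf 1_X$ with $s\in(0,1)$ to be chosen. For a discrete-time BRW whose $N$-step offspring means at $x$ are $m_x(y):=\lambda^N k^{(N)}_{xy}$, the coordinate $\overline G(s\mathbf 1_X\mid x)$ is the generating function of a sum of independent offspring counts evaluated at the constant argument $s$; I would use the elementary bound
\[
\overline G(s\mathbf 1_X\mid x)\le \exp\!\Bigl(-(1-s)\sum_{y\in X}m_x(y)\;+\;C(1-s)^2\bigr(\textstyle\sum_{y\in X}m_x(y)^2+\cdots\bigr)\Bigr)
\]
or, more cleanly, the exact expression $\overline G(s\mathbf 1_X\mid x)=\bigl(1+\lambda K(\mathbf 1_X-s\mathbf 1_X)(x)\bigr)^{-N\text{-fold}}$ coming from the explicit form of $G$ in the excerpt — composing $G(\,\cdot\,\mid x)=1/(1+\lambda K(\mathbf 1-\cdot)(x))$ with itself $N$ times on a constant input stays a function of a few scalar quantities. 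Writing $a:=1-s$, one checks that $\overline G(s\mathbf 1_X\mid x)\le s$ reduces to an inequality of the shape $1-\overline G \ge a$, and since $1-\overline G(s\mathbf 1_X\mid x)$ has leading term $a\cdot\lambda^N\sum_y k^{(N)}_{xy}\ge a\,(\lambda(K_w-\varepsilon))^N$, which strictly exceeds $a$, the inequality holds for all sufficiently small $a>0$ \emph{uniformly in $x$} — and here is exactly where the two uniformity conditions enter: (U1) gives the uniform exponent $N$ and the uniform lower bound on $\sum_y k^{(N)}_{xy}$, while (U2) together with $\sup_x\sum_y k_{xy}<\infty$ gives a uniform \emph{upper} control on the second-order error terms (the sums $\sum_y (k^{(N)}_{xy})^2$, cross terms, etc.), so that the higher-order corrections are $O(a^2)$ with a constant independent of~$x$.

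Concretely, the order of steps I would follow is: (i) record that $\lambda_w(x)\ge 1/K_w(x)$ is already in hand and that $K_w$, $\lambda_w$ are class-constants, so only $\lambda_w\le 1/K_w$ at one vertex is needed; (ii) fix $\lambda>1/K_w$, choose $\varepsilon$ with $\lambda(K_w-\varepsilon)>1$, and extract the uniform $N$ from (U1), upgrading it to a uniform lower bound $\sum_y k^{(N)}_{xy}\ge (K_w-\varepsilon)^N$ valid for all $x$; (iii) define $(X,\widehat K)$ with $\widehat k_{xy}=\lambda^N k^{(N)}_{xy}$ and note survival of $(X,\widehat K)$ implies global survival of the original BRW at parameter $\lambda$; (iv) compute $\overline G(s\mathbf 1_X\mid x)$ explicitly from the iterated form of $G$, expand $1-\overline G(s\mathbf 1_X\mid x)$ to first order in $a=1-s$ with a uniform remainder estimate using (U2) and the bounded row sums; (v) conclude that for $a$ small enough, $\overline G(s\mathbf 1_X)\le s\mathbf 1_X$ pointwise, hence by Theorem~\ref{th:survival} there is global survival, so $\lambda\ge\lambda_w$; since $\lambda>1/K_w$ was arbitrary, $\lambda_w\le 1/K_w$, and with~\eqref{eq:comparison} equality follows.

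The main obstacle I anticipate is step (iv): getting the error term in the expansion of $1-\overline G(s\mathbf 1_X\mid x)$ to be $O(a^2)$ \emph{with a constant uniform over all $x\in X$}. The iterated generating function involves products over the $N$-step paths from $x$, and naively the number of terms (and the combinatorial coefficients) could blow up with $x$; this is precisely why one needs both a lower bound on every positive $k_{xy}$ (condition (U2), to control how many intermediate vertices a path of fixed total weight can traverse) and an upper bound $\sup_x\sum_y k_{xy}<+\infty$ (to keep each $k^{(n)}_{xy}$ and each $\sum_y k^{(n)}_{xy}$ bounded by $(\sup_x\sum_y k_{xy})^n$, independently of $x$). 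Making this uniform second-order control rigorous — i.e.\ bounding $\sum_{y}(k^{(N)}_{xy})^2$ and the analogous higher terms by a constant depending only on $N$, $\sup_x\sum_y k_{xy}$ and $\inf\{k_{xy}:k_{xy}>0\}$ — is the technical heart of the argument; everything else is bookkeeping around Theorem~\ref{th:survival} and the equivalence of the continuous-time BRW with its iterated discrete-time counterpart.
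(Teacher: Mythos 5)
Your overall strategy is the same as the paper's (pass to an auxiliary discrete-time BRW obtained by iterating the generating function, exhibit a constant subfixed point $s\mathbf{1}_X$ via a second-order Taylor estimate, with (U1) supplying uniformity of the exponent and (U2) plus $\sup_x\sum_y k_{xy}<\infty$ controlling the second-order term), but step (ii) contains a genuine gap. You claim that (U1) can be ``upgraded'' to a \emph{common} exponent $N$ with $\sum_{y}k^{(N)}_{xy}\ge(K_w-\varepsilon)^N$ for all $x$, invoking supermultiplicativity. Condition (U1) only bounds $n_x(\varepsilon)$, the \emph{first} time $n$ at which $T^n_x:=\sum_y k^{(n)}_{xy}\ge(K_w-\varepsilon)^n$; it says nothing about later times, and $T^n_x$ is not supermultiplicative. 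The only general relation is $T^{n+m}_x=\sum_y k^{(n)}_{xy}T^m_y\ge T^n_x\,\delta^m$ with $\delta:=\inf\{k_{xy}:k_{xy}>0\}$, which degrades the bound whenever $\delta\le 1$: a vertex with $n_x=1$ whose sequence $T^n_x$ then stagnates for a long (vertex-dependent) stretch is perfectly compatible with (U1) and with $\liminf_n (T^n_x)^{1/n}=K_w$, yet violates your claimed bound at any prescribed common $N$. The paper avoids this entirely by keeping the step length vertex-dependent: it sets $\hat k_{xy}=k^{(n_x)}_{xy}$, so the one-step mean at $x$ is $\lambda^{n_x}T^{n_x}_x\ge(\lambda(K_w-\varepsilon))^{n_x}>1$ uniformly (it chooses $\varepsilon$ with $\lambda(K_w-\varepsilon)>1+\varepsilon$ to get the mean $\ge 1+\varepsilon$), and uses $N:=\sup_x n_x<\infty$ from (U1) only to make the error term uniform. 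Your argument should be restructured this way; as written, the common-exponent reduction does not follow from the hypotheses.

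A second, smaller point: the uniform $O((1-s)^2)$ control, which you rightly identify as the technical heart, is only sketched, and two of your suggested shortcuts do not work. Iterating $G(\cdot|x)=1/(1+\lambda K(\mathbf{1}-\cdot)(x))$ on a constant input does \emph{not} stay ``a function of a few scalar quantities'', since $G(s\mathbf{1}_X|\cdot)$ is already a non-constant vector; and an upper bound on a probability generating function of the form $\exp(-(1-s)\,\mathrm{mean}+\dots)$ is not valid in general. The workable route (the paper's) is the Taylor bound $\widehat G_x(1-t)\le 1-m_{n_x,x}t+\tfrac{t^2}{2}\bigl(m^{(2)}_{n_x,x}-m_{n_x,x}\bigr)$, combined with the recursion for $\xi_{n,x}:=(m^{(2)}_{n,x}-m_{n,x})/m^2_{n,x}$, which (U2) bounds by $2\sum_{k=0}^{n-1}(\lambda\delta)^{-k}$, together with $1<1+\varepsilon\le m_{n_x,x}\le(\lambda M')^{N}$ where $M'=\sup_x\sum_y k_{xy}$; this yields an explicit uniform threshold $t\le 2\varepsilon/(M(\lambda M')^{2N})$ and then Theorem~\ref{th:survival} concludes. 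So your plan is salvageable, but only after replacing the common exponent by the vertex-dependent $n_x$ and actually carrying out this uniform second-moment estimate.
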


\begin{proof}
By irreducibility $\lambda_w$ and $K_w$ do not depend on $x \in X$.
 Fix $\lambda > 1/K_w$: we want to prove that the 
 $\lambda$-BRW survives.
Choose $\eps > 0$ such that $\lambda \left(K_w - \eps\right) > 1 + \eps$ and let 
$n_x:=n_x(\eps)$ for all $x\in X$. We study 
a discrete-time BRW $(X,\widehat K)$  where $\hat k_{xy}=k^{(n_x)}_{xy}$, for all $x,y\in X$.
This means that, in $(X,\widehat K)$, the 1-step children of a particle living at $x$ are the $n_x$-th generation 
descendants of the particle, in $(X,K)$.
Clearly if $(X,\widehat K)$ survives, so does $(X,K)$. The generating function of $(X,\widehat K)$ is given by
\begin{equation*}
 \overline{G}(\mathbf{z}|x)=G^{(n_x)}(\mathbf{z}|x) 
\end{equation*}
where $G$ is the generating function of $(X,K)$.

Let $\nu_x$ be the distribution of the total number of children of a particle at $x$ in $(X,\widehat K)$. Denote by 
$\widehat G_x$ the 1-dimensional generating function of $\nu_x$ which is given by $\widehat G_x(t)\equiv\overline{G}\left(t\I_X|x\right)$.
Then the mean number of descendants of a particle at $x$ in $(X,\widehat K)$ is
\begin{equation*}
\widehat G_x^{\prime}(1) = \sum_{n=0}^\infty n \nu_x(n) =\lambda^{n_x} T_x^{n_x} > \left(\lambda (K_w -\eps)\right)^{n_x} \geq 1+\eps.
\end{equation*}
Since $(X,K)$ is a continuous-time BRW, then $G({\bf{z}}|x) = 1/(1+\lambda  \sum_{y\in X} k_{xy}(1-{\bf{z}}(y)))$. 
We can determine the first and second moments of the number of $n$-th generation descendants of a particle at $x$, by means of $G$.
Indeed let us denote these moments by $m_{n,x}$ and $m_{n,x}^{(2)}$ respectively.
If ${\bf{z}}=t\bf{\I_X}$ we have
\begin{equation*}
m_{n,x}=\frac{d}{dt}G^{(n)}(t{\bf{\I_X}}|x)\Bigr|_{\substack{t=1}} = \lambda  \sum_{y\in X} k_{xy} \frac{d}{dt}G^{(n-1)}
(t{\bf{\I_X}}|y)\Bigr|_{\substack{t=1}}
= \lambda  \sum_{y\in X} k_{xy} m_{n-1,y}
\end{equation*}
and
\[
\begin{split}
m_{n,x}^{(2)} - m_{n,x} &= \frac{d^2}{dt^2}G^{(n)}(t{\bf{\I_X}}|x)\Bigr|_{\substack{t=1}} \\
&= \sum_{y\in X} \lambda k_{xy} \frac{d^2}{dt^2}G^{(n-1)}(t{\bf{\I_X}}|y)\Bigr|_{\substack{t=1}} + 
2 \left( \sum_{y\in X} \lambda k_{xy} \frac{d}{dt}G^{(n-1)}(t{\bf{\I_X}}|y)\Bigr|_{\substack{t=1}}\right)^2.
\end{split}
\]
We denote by $\xi_{n,x}:=(m_{n,x}^{(2)} - m_{n,x})/m_{n,x}^2$.
Then for all $x\in X$,
\begin{equation}\label{eq:xi}
\xi_{n,x}:=  2 + \frac{ \sum_{y\in X}  \lambda k_{xy} 
\frac{d^2}{dt^2}G^{(n - 1)}(t{\bf{\I_X}}|y)\Bigr|_{\substack{t=1}}}{\left( \sum_{y\in X}  \lambda k_{xy} 
\frac{d}{dt}G^{(n - 1)}(t{\bf{\I_X}}|y)\Bigr|_{\substack{t=1}}\right)^2}
=2 + \frac{ \sum_{y\in X} \lambda k_{xy} \xi_{n-1,y} m^2_{n-1,y}}{\Big ( \sum_{y\in X} \lambda k_{xy} m_{n-1,y}\Big )^2}.
\end{equation}
Define $\xi_n:= \sup_{x \in X} \xi_{n,x}$;
a straightforward computation shows that
$\xi_{1,x}=2=\xi_1$ for all $x \in X$.
From equation~\eqref{eq:xi} we have
\[
 \xi_{n}\le 2+\xi_{n-1}\sup_{x \in X} \frac{ \sum_{y\in X} \lambda k_{xy} m^2_{n-1,y}}{\Big ( \sum_{y\in X} \lambda k_{xy} m_{n-1,y}\Big )^2}
 \le 2 + \xi_{n-1} \sup_{x \in X} \frac{ \sum_{y\in X} \lambda k_{xy} m^2_{n-1,y}}{\Big ( \sum_{y\in X} \sqrt{\lambda k_{xy} \delta}\,  m_{n-1,y}\Big )^2}
 \le 2 + \frac{\xi_{n-1}}{\delta}
\]
where $\delta := \lambda  \inf\{k_{xy}\colon x,y \text{ such that } k_{xy} > 0\}$.
Hence by induction
\begin{equation*}
\xi_{n} \leq 2 \sum_{k=0}^{n-1} \left(\frac{1}{\delta}\right)^k,
\end{equation*}
which implies $\xi_{n_x} \leq M:=2\sum_{k=0}^{N-1} \left(\frac{1}{\delta}\right)^k$ where $N:= \sup_{x \in X} n_x<+\infty$ by condition (U1).

By Theorem~\ref{th:survival} in order to prove survival of $(X,\widehat K)$ it is enough to prove that $\widehat G_x(1-t) \le 1-t$, 
for some $t \in (0,1)$ and for all $x\in X$. Writing the Taylor expansion of $\widehat G_x$ at $1$ and using the monotonicity 
of $\widehat G_x^{\prime\prime}(\cdot)$, we have
\begin{equation*}
 \begin{split}
  \widehat G_x(1-t) 
&\leq 1-m_{n_x,x} t+\frac{t^2}{2}\widehat G_x^{\prime\prime}(1) \\
&=1-m_{n_x,x} t+\frac{t^2}{2}\left(m_{n_x,x}^{(2)}-m_{n_x,x}\right). 
 \end{split}
\end{equation*}

Therefore, $\widehat G_x(1-t) \leq 1-t$ for all $x\in X$ if 
\begin{equation*}
t\le 2 \left(\sup_{x\in X}\frac{m_{n_{x},x}^{(2)}-m_{n_{x},x}}{m_{n_{x},x} - 1}\right)^{-1}.
\end{equation*}
Since $m_{n_x,x} > 1 + \eps$  for any $x \in X$ and
\begin{equation*}
\frac{m_{n_{x},x}^{(2)}-m_{n_{x},x}}{m_{n_{x},x} - 1} = \xi_{n_x,x} \frac{m_{n_{x},x}^2}{m_{n_{x},x} - 1} 
\end{equation*}
we get
\begin{equation*}
\frac{m_{n_{x},x}^{(2)}-m_{n_{x},x}}{m_{n_{x},x} - 1} \leq M \frac{\left(\lambda M^\prime \right)^{2N}}{\eps}
\end{equation*}
where $M^\prime:=\sup_{x \in X} \sum_{y \in X} k_{xy}$. Thus 
the constant solution is obtained by choosing a strictly positive $t \le 2\eps/(M(\lambda M^\prime )^{2N})$.
%
%
\end{proof}
In the previous theorem irreducibility is not necessary:
it suffices that $(X,K)$ is such that $K_w(x)$ does not depend on $x$.
In particular this is the case when $(X,\widetilde K)$ can be projected onto an irreducible BRW, which
leads to the following corollary.
\begin{cor}\label{cor:main}
Let $(X,K)$ be a BRW which can be projected onto an irreducible BRW $(Y,\widetilde K)$ which satisfies the hypotheses
of Theorem~\ref{thm:main}.
Then $\lambda_w(x;X,K)$ and $K_w(x;X,K)$ do not depend on $x \in X$ and $\lambda_w(X,K)=1/K_w(X,K)$.
\end{cor}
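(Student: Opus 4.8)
The plan is to deduce everything from the invariance of $\lambda_w$ and $K_w$ under projections, which was recalled in Subsection~\ref{subsec:projec}; no new analysis is needed, since Theorem~\ref{thm:main} will do all the work on the target BRW. Let $g\colon X\to Y$ be a projection of $(X,K)$ onto $(Y,\widetilde K)$. First I would recall that global survival of $(X,K)$ starting from $x$ is equivalent to global survival of $(Y,\widetilde K)$ starting from $g(x)$, hence $\lambda_w(x;X,K)=\lambda_w(g(x);Y,\widetilde K)$ for every $x\in X$; and that $\sum_{z\in X}k^{(n)}_{xz}=\sum_{y\in Y}\widetilde k^{(n)}_{g(x)y}$ for all $n\in\N$, so the two $\liminf$'s defining $K_w$ agree and $K_w(x;X,K)=K_w(g(x);Y,\widetilde K)$ for every $x\in X$.

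Next, since $(Y,\widetilde K)$ is irreducible, both $\lambda_w(\cdot;Y,\widetilde K)$ and $K_w(\cdot;Y,\widetilde K)$ are constant on $Y$; denote these constants $\lambda_w(Y,\widetilde K)$ and $K_w(Y,\widetilde K)$. Combined with the previous step, $\lambda_w(x;X,K)=\lambda_w(Y,\widetilde K)$ and $K_w(x;X,K)=K_w(Y,\widetilde K)$ for all $x\in X$, so neither quantity depends on $x$. Finally, by hypothesis $(Y,\widetilde K)$ satisfies all the assumptions of Theorem~\ref{thm:main} (irreducibility, $\sup_{y\in Y}\sum_{y'\in Y}\widetilde k_{yy'}<+\infty$, and conditions (U1) and (U2)), so Theorem~\ref{thm:main} gives $\lambda_w(Y,\widetilde K)=1/K_w(Y,\widetilde K)$. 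Chaining the three steps, $\lambda_w(X,K)=\lambda_w(Y,\widetilde K)=1/K_w(Y,\widetilde K)=1/K_w(X,K)$, which is the assertion.

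There is essentially no hard step here, as the corollary is exactly "Theorem~\ref{thm:main} plus projection invariance." The only point worth stressing in the write-up is \emph{why} one cannot simply apply Theorem~\ref{thm:main} to $(X,K)$ itself: condition (U2) is in general not inherited by $(X,K)$ from $(Y,\widetilde K)$, because $\widetilde k_{g(x)y}=\sum_{z\in g^{-1}(y)}k_{xz}$ can be bounded away from $0$ while the individual entries $k_{xz}$ are not. So the detour through the projection is genuinely needed, and this is precisely the reason the corollary is phrased in terms of projections rather than as a direct weakening of the hypotheses of Theorem~\ref{thm:main}.
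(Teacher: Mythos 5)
Your argument is correct and is essentially the paper's own proof: projection invariance of $\lambda_w$ and $K_w$, constancy on $Y$ by irreducibility, and then an application of Theorem~\ref{thm:main} to $(Y,\widetilde K)$. Your closing remark on why (U2) need not hold for $(X,K)$ itself matches the paper's own discussion following the corollary, so nothing is missing.
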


\begin{proof}
 The independence comes from the equalities  $\lambda_w(x;X,K)=\lambda_w(g(x);Y, \widetilde K)$ and $K_w(x;X, K)=K_w(g(x);Y, \widetilde K)$ 
and the fact that the two parameters computed
 on $Y$ do not depend on the vertex by irreducibility. It is enough now to apply Theorem~\ref{thm:main} to $(Y, \widetilde K)$. 
\end{proof}
As we remarked before, any BRW with $X$ finite satisfies (U1) and (U2), thus the previous corollary is a generalization, in the
irreducible case, of the results for $\mathcal F$-BRWs (\cite[Proposition 4.5]{cf:BZ2} and \cite[Corollary 4.10(2)]{cf:BZ4}).
Morevoer, our result applies to BRWs which do not satisfy (U2) but can be projected onto a BRW which satisfy it.
As an example, consider a BRW on $\N$ such that $k_{n \, n+1}=1-1/2^{n+1}$, $k_{n\,n}:=1/2^{n+1}$ and $0$ otherwise.
This is an irreducible BRW which satisfies (U1) but not (U2) and can be projected
onto a one-point BRW with rate 1; thus Corollary~\ref{cor:main} applies.

\begin{teo}\label{th:mapping}
 Let $(X,K)$ be a continuous-time BRW such that (U2) holds. Suppose there exist
 $x_0\in X$, $Y\subseteq X$, $n_0\in\N$ such that
 \begin{enumerate}
  \item \label{n0} for all $x\in X$, $\min\{n\in \N\colon k_{xy}^{(n)}>0\text{ for some }y\in Y\}\le n_0$;
  \item \label{inject} for all $y\in Y$, there exists an injective map $\varphi_y:X\to X$ such that $\varphi_y(x_0)=y$ and 
  $k_{\varphi_y(x)\varphi_y(z)}\ge k_{xz}$ for all $x,z\in X$;
 \end{enumerate}
then (U1) holds. 
\end{teo}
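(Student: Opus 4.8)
The plan is to obtain (U1) from two observations: that the maps $\varphi_y$ let one transport the total mass $T^n_x:=\sum_{w}k^{(n)}_{xw}$ from $x_0$ to any $y\in Y$ without loss, and that the short positive-weight paths into $Y$ supplied by~\ref{n0} then spread this estimate to every vertex. Throughout set $\delta:=\inf\{k_{uv}\colon k_{uv}>0\}>0$ (this is (U2)) and $\rho:=K_w(x_0)$. First I would prove the \emph{transport estimate} $T^n_y\ge T^n_{x_0}$ for all $y\in Y$ and all $n$. Recall that $T^n_x$ equals the sum of $\prod_{i=0}^{n-1}k_{v_iv_{i+1}}$ over all sequences $(v_0,\dots,v_n)$ with $v_0=x$. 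Fix $y\in Y$ and the injective map $\varphi_y$ of~\ref{inject}; send each sequence $(x_0=v_0,\dots,v_n)$ to $(\varphi_y(v_0),\dots,\varphi_y(v_n))$, which starts at $\varphi_y(x_0)=y$, has weight $\prod_i k_{\varphi_y(v_i)\varphi_y(v_{i+1})}\ge\prod_i k_{v_iv_{i+1}}$ by~\ref{inject}, and — since $\varphi_y$ is injective — is a distinct sequence for distinct source sequences. Summing the rate domination over all sequences from $x_0$ and discarding the remaining (nonnegative) terms of $T^n_y$ gives $T^n_y\ge T^n_{x_0}$. Injectivity is precisely what forbids double counting, and this is where~\ref{inject} is used at full strength.

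Next I would propagate this to an \emph{$x$-uniform lower bound}. Given $x\in X$, by~\ref{n0} pick $m=m_x\le n_0$ and $y=y_x\in Y$ with $k^{(m)}_{xy}>0$; a positive-weight path realising this has every factor $\ge\delta$, so $k^{(m)}_{xy}\ge\delta_0:=\min\{1,\delta^{n_0}\}>0$. Splitting paths at time $m$ and using the transport estimate,
\[
T^n_x=\sum_{w}k^{(n)}_{xw}\ge k^{(m)}_{xy}\sum_{w}k^{(n-m)}_{yw}=k^{(m)}_{xy}\,T^{\,n-m}_{y}\ge\delta_0\,T^{\,n-m}_{x_0}\qquad(n\ge n_0).
\]
Taking $n$-th roots gives $K_w(x)\ge K_w(x_0)=\rho$ for every $x$. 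Combined with the fact (Remark~\ref{rem:lambdaw}) that $K_w$ is non-increasing along paths, so that $x\rightleftharpoons x_0$ forces $K_w(x)=\rho$, we get $K_w\equiv\rho$ on $X$ whenever $(X,K)$ is irreducible — the situation relevant for the periodic tree-like BRWs of Section~\ref{subsec:treelike}. I would then assume this constancy of $K_w$ for the last step.

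Finally, to conclude (U1), fix $\eps>0$; we may assume $\rho>\eps$, since otherwise $(K_w(x)-\eps)^n\le0$ for odd $n$ while $T^n_x\ge0$, so $n_x(\eps)$ is trivially bounded. Choose $\eps'\in(0,\eps)$. By definition of $\rho=\liminf_n(T^n_{x_0})^{1/n}$ there is $k_1=k_1(\eps')$ with $T^k_{x_0}\ge(\rho-\eps')^k$ for all $k\ge k_1$; hence for $n\ge n_0+k_1$ (so $n-m_x\ge k_1$) the inequality above yields
\[
T^n_x\ge\delta_0\,(\rho-\eps')^{\,n-m_x}\ge C_{\eps'}\,(\rho-\eps')^n,\qquad C_{\eps'}:=\delta_0\min\{1,(\rho-\eps')^{-n_0}\}>0,
\]
with $C_{\eps'}$ independent of $x$. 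Since $(\rho-\eps)/(\rho-\eps')<1$ there is $N_1$ with $C_{\eps'}\ge\big((\rho-\eps)/(\rho-\eps')\big)^{n}$ for $n\ge N_1$, so that $T^n_x\ge(\rho-\eps)^n=(K_w(x)-\eps)^n$ for all $x\in X$ and all $n\ge N(\eps):=\max\{n_0+k_1(\eps'),N_1\}$. Thus $n_x(\eps)\le N(\eps)$ uniformly in $x$, which is (U1).

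The creative step is the transport estimate: recognising that the pointwise domination $k_{\varphi_y(x)\varphi_y(z)}\ge k_{xz}$ together with injectivity of $\varphi_y$ promotes to the global bound $T^n_y\ge T^n_{x_0}$. Everything after that is bookkeeping, the one delicate point being the last step, where the $x$-independent constant $C_{\eps'}$ \emph{and} the bounded but $x$-dependent offset $m_x\le n_0$ must be absorbed into a single threshold $N(\eps)$ valid for all $x$; this is what forces the two-scale choice $\eps'<\eps$ instead of working with $\eps$ directly. The only point needing more than formal manipulation outside the irreducible case is the equality $K_w(x)=K_w(x_0)$, which is what lets one replace $\rho$ by $K_w(x)$ in the final line.
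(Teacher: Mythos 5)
Your proof is correct and follows essentially the same route as the paper's: the injectivity of $\varphi_y$ gives the transport bound $T^n_y\ge T^n_{x_0}$, paths are split at the first entry into $Y$ (with (U2) bounding the connecting weight from below), and the $x$-dependent offset $m_x\le n_0$ is absorbed uniformly. The only differences are bookkeeping — the paper normalizes the rates by $\delta$ to make $\widehat T^n_x$ monotone in $n$, whereas you keep the offset and use a two-scale $\eps'<\eps$ argument — and you make explicit the constancy of $K_w(x)$ (via irreducibility), which the paper's proof also uses but only implicitly.
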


\begin{proof}
Let $\delta=\inf\{k_{xy}\colon x,y \text{ such that }k_{xy}>0\}>0$ and put
$ \gamma_{xy}=k_{xy}/\delta$ for all $x,y\in X$. Define $\widehat T^n_x=\sum_{w\in X}\gamma_{xw}^{(n)}$
and $\widehat K_w:=\liminf_{n\to\infty}\sqrt[n]{\sum_{w\in X}\gamma_{xw}^{(n)}}$.
Clearly $\gamma_{xy}\ge1$ for all $x,y\in X$, $T^n_x=\delta^n \widehat T^n_x$ for all $x\in X$ and $n\in\N$,
$K_w=\delta \widehat K_w$; moreover $n \mapsto \widehat T^n_x$ is nondecreasing for all $x \in X$. 

Given $x\in X$, let $y(x)\in X$ be a vertex such that $k_{xy(x)}^{(m_x)}>0$ and
$m_{x}:=\min\{n\in \N\colon k_{xy}^{(n)}>0\text{ for some }y\in Y\}$.
Note that, by the hypothesis \eqref{inject}, we can map $x_0$ to $y(x)$ and get  
\begin{equation}\label{eq:Tn}
 \widehat T^n_{x_0}\le \widehat T_{y(x)}^n\le \gamma_{xy(x)}^{(m_x)}\sum_{w\in X}\gamma_{y(x)w}^{(n)}\le \widehat 
 T_{x}^{n+m_x}\le \widehat T_{x}^{n+n_0},
\end{equation}
for all $n\in\N$ (the second inequality is due to $\gamma_{xy(x)}^{(m_x)}\ge1$ and the last one holds since $m_x \le n_0$).
By irreducibility and the definition of $\widehat K_w$, given $\eps>0$, there exists $n_1\in\N$ such that
\begin{equation}
 \left(\widehat T^{n_1}_{x_0}\right)^{\frac{1}{n_1+n_0}}\ge \widehat K_w-\frac{\eps}{\delta}.
\end{equation}
Applying \eqref{eq:Tn},
\[
\left(\widehat T^{n_1+n_0}_{x}\right)^{\frac{1}{n_1+n_0}}\ge
\left(\widehat T^{n_1}_{x_0}\right)^{\frac{1}{n_1+n_0}}\ge
 \widehat K_w-\frac{\eps}{\delta}.
\]
Now,
\[
\left( T^{n_1+n_0}_{x}\right)^{\frac{1}{n_1+n_0}}=
\delta\left(\widehat T^{n_1+n_0}_{x}\right)^{\frac{1}{n_1+n_0}}
\ge  K_w-\eps
\]
thus
(U1) is satisfied.
\end{proof}

\section{Examples}\label{sec:examples}

This section is mainly devoted to examples of BRWs
where $\lambda_w=1/K_w$. In Section~\ref{subsec:treelike} we define the family
of periodic tree-like BRWs, where Theorem~\ref{thm:main} applies.
Next, in Section~\ref{subsec:BPVE} we view continuous-time branching processes in
varying environment as BRWs and determine their critical parameter.
Finally, Section~\ref{subsec:otherex} provides two examples (Examples~\ref{exmp:treeandZ}
and \ref{exmp:star}) showing that (U1) is not
necessary for $\lambda_w=1/K_w$, even when $K_w=\lim_{n\to\infty}\sqrt[n]{T_x^n}$,
both in the case where there is a weak phase ($\lambda_w<\lambda_s$) and where
there is not ($\lambda_w=\lambda_s$). Example~\ref{exmp:backtotheorigin} shows that even 
in the irreducible
case, it can be that $\lim_{n\to\infty}\sqrt[n]{T_x^n}$ does not exist and yet $\lambda_w=1/K_w$.
Example~\ref{exmp:finally} is the first example where $\lambda_w > 1/K_w$.
 
\subsection{Periodic tree-like BRWs}
\label{subsec:treelike}

We describe the construction of a class of irreducible BRWs that we call
\textit{periodic tree-like BRWs}.
Let $(I,E_I)$ be an irreducible finite oriented graph (possibly with loops) 
and let $\{(B_i, K(i))\}_{i \in I}$ be a family of finite and irreducible BRWs.
It might happen that even if $i \neq j$ then $(B_i, K(i))$ and $(B_j, K(j))$ are isomorphic BRWs.
Denote by $\{\varphi_{ij}\}_{(i,j) \in E_I}$ a family
of one-to-one maps 
from the domains $\mathrm{D}(\varphi_{ij})=:B_{ij}^- \subseteq B_i$ onto the images 
 $\mathrm{Im}(\varphi_{ij})=:B_{ij}^+\subseteq B_j$.
The main step of the construction is \textit{attaching an isomorphic copy of $B_i$ to an isomorphic copy of $B_j$}, that is, identifying a point
$x \in B_{ij}^-$ with $\varphi(x) \in B_{ij}^+$ for all $x \in  B_{ij}^-$ (each copy of $B_i$ is equipped with 
the same family of rates $K(i)$). 

We start by constructing recursively a labeled tree $({\mathcal{T}},E_{\mathcal{T}})$ which is going to be the skeleton of the BRW.
Denote by $i_0 \in I$ the \textit{root} of $I$ and let ${\mathcal{T}}_0:=\{(0,i_0)\}$ and $E(0):=\emptyset$; the label of $(0,i_0)$ 
is the projection $\pi((0,i_0)):=i_0$. Suppose we defined ${\mathcal{T}}_0, \ldots, {\mathcal{T}}_n$ and the set of edges $E(n) \subseteq \bigcup_{i=1}^n {\mathcal{T}}_i \times \bigcup_{i=1}^n {\mathcal{T}}_i$ and
suppose we defined the label $\pi(x)$ for all $x \in \bigcup_{i=1}^n {\mathcal{T}}_i$;
then ${\mathcal{T}}_{n+1}:= \{(x,i)\colon x \in {\mathcal{T}}_n, (\pi(x), i) \in E_I \}$ and $E(n+1):=E(n) \cup \{(x,(x,i))\colon (x,i) \in {\mathcal{T}}_{n+1}\}$. Moreover $\pi((x,i)):=i$ for
all $(x,i) \in {\mathcal{T}}_{n+1}$. Finally ${\mathcal{T}}:=\bigcup_{n \in \N} {\mathcal{T}}_n$ and $E_{\mathcal{T}}:=\bigcup_{n \in \N} E(n)$.
Roughly speaking, to each point in ${\mathcal{T}}_n$ of label $j$, we attach the same number of edges exiting $j$ in the graph $(I,E_I)$ and we
label the new endpoints accordingly; these new endpoints belong, by definition, to ${\mathcal{T}}_{n+1}$.

Note that, by construction, for every $i \in I$ there is an infinite number of vertices in $\mathcal{T}$ with label $i$;
moreover, since $(I,E_I)$ is connected and finite, the minimal number of steps required to go from $i$ to $i_0$ is bounded from above
by some $n_1$
with respect to $i$. The same bound holds for the minimal number of steps required to
go from a point with label $i$ in ${\mathcal{T}}$ to the closest point with label $i_0$.

We can construct now the periodic tree-like BRW. Let $\{(B_x,K(x))\}_{x \in {\mathcal{T}}}$ be a family of BRWs such that $(B_x,K(x))$ is an 
isomorphic copy of $(B_{\pi(x)}, K(\pi(x)))$  (we suppose that $B_x \cap B_y=\emptyset$ for all $x \neq y$).
For every $(x,y) \in {\mathcal{T}}$, we attach $B_x$ to $B_y$ as described above. The resulting irreducible BRW is denoted by $(X,K)$; note that, for this BRW,
condition (U2) holds since it is constructed by means of a finite number of types of BRWs.

Let us choose $x_0$ in the \textit{root set} $B_{(0,i_0)}$ where $(0,i_0)$ is the root of ${\mathcal{T}}$. 
We denote by $Y$ the set of the copies of the vertex $x_0$ (collected inside the copies
of the set $B_{(0,i_0)}$ inside $X$, namely $\{B_x\}_{x \in \mathcal{T} \colon \pi(x)=i_0}$).
Given any graph, let $d(x,y)$ be the minimal number of steps required to go from $x$ to $y$. Since $\{B_i\}_{i \in I}$ is a
finite family of finite sets we have that $n_2:=\sup_{i \in I, x,y \in B_i} d(x,y)<+\infty$.
Suppose we start from a vertex $x \in X$ and we want to reach the set $Y$; by construction, $x$ belongs to a copy of $B_{j_0}$
for some $j_0 \in I$. Let $\{j_0, \ldots, j_k\equiv i_0\}$ be the shortest path from $j_0$ to $i_0$ in $(I,E_I)$; clearly, $k \le n_1$. In order to reach
$Y$ from $x$ it is enough to exit the copy of $B_{j_0}$, to cross a copy of $B_{j_1}, \ldots, B_{j_{k-1}}$ (in this order),
to enter a copy of $B_{j_k}\equiv B_{i_0}$ and then to reach the copy of $x_0$ inside the last copy of $B_{j_k}$. Each one of these
actions requires at most $n_2$ steps. This implies that the length of the shortest path from $x$ to $Y$ is at most
$(n_1+1)n_2=:n_0$. Then Theorem~\ref{th:mapping} applies, and since $\sup_{x\in X}\sum_{y\in Y}k_{xy}<+\infty$
we can apply Theorem~\ref{thm:main} and get $\lambda_w=1/K_w$.

Here is an explicit example of such a construction: define $I:=\{1,2,3\}$, $i_0:=3$ and consider the graph $(I,E_I)$ pictured in
Figure~\ref{fig:1}. The corresponding pieces are shown in Figure~\ref{fig:2}; Figure~\ref{fig:3} explains how to join the pieces. 
The construction of the labeled tree $({\mathcal{T}},E_{\mathcal{T}})$ can be found in Figure~\ref{fig:4} and the final graph associated to the
periodic tree-like BRW is shown in Figure~\ref{fig:5} where we denoted by $x_0$ the ``actual $x_0$'' contained in the root set and all its copies.
\begin{figure}%

\begin{minipage}{0.48\textwidth}%
\centering
\includegraphics[width=0.48\textwidth]{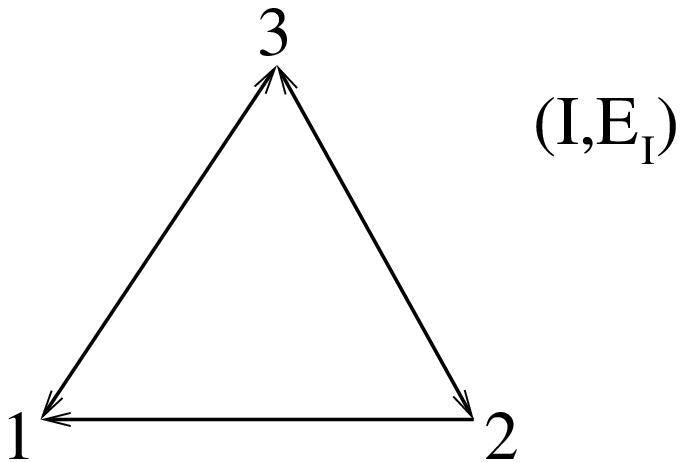}
\caption{The graph $(I,E_I)$.}%
\label{fig:1}%
\end{minipage}
\begin{minipage}{0.48\textwidth}%
\centering
\includegraphics[width=0.48\textwidth]{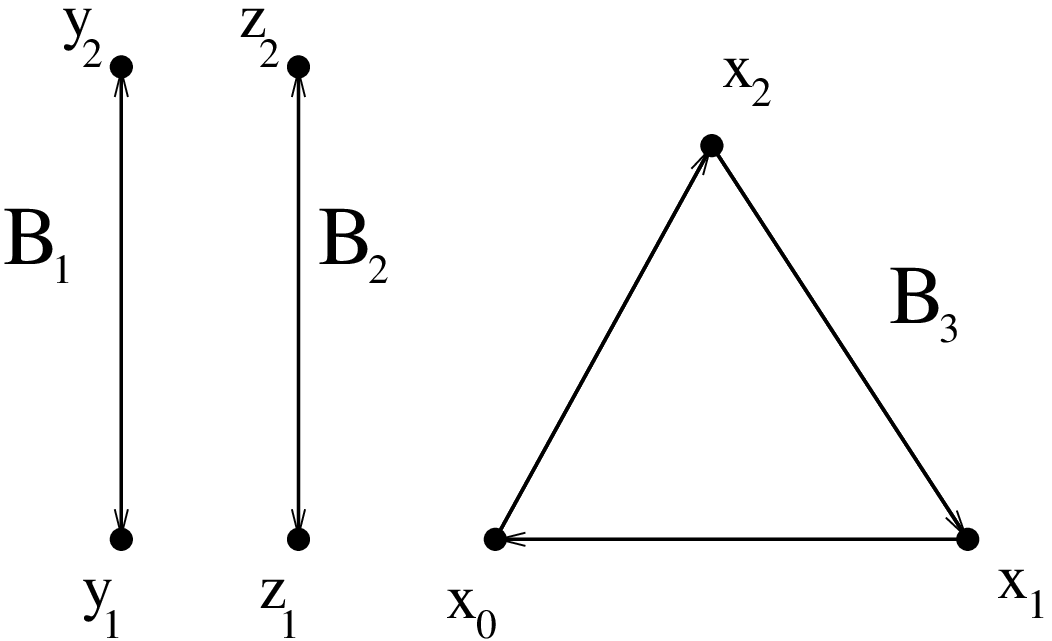}
\caption{The pieces $\{B_i\}_{i\in I}$.}%
\label{fig:2}%
\end{minipage}%
\end{figure}%

\begin{figure}%
\centering
\includegraphics[width=0.8\textwidth]{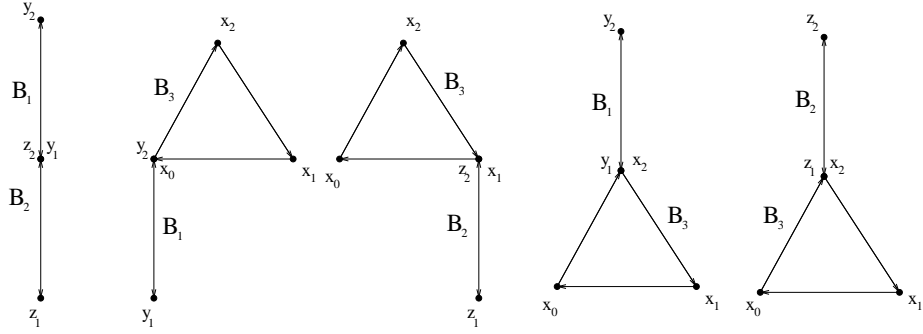}
\caption{How to join pieces.}%
\label{fig:3}%
\end{figure}%

\begin{figure}%
\begin{minipage}{0.29\textwidth}%
\centering
\includegraphics[width=0.8\textwidth]{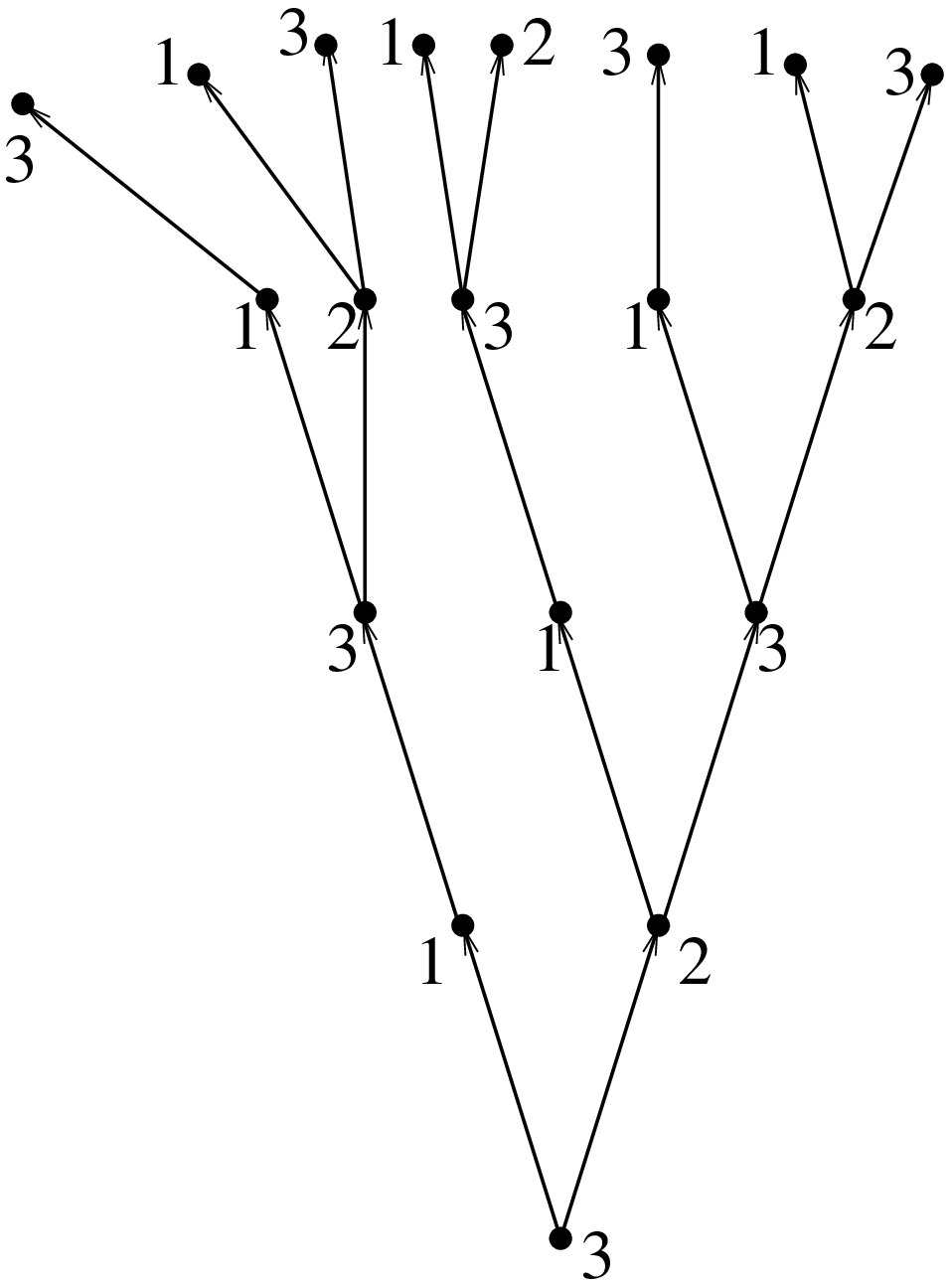}
\caption{The labeled tree $({\mathcal{T}},E_{\mathcal{T}})$.}%
\label{fig:4}%
\end{minipage}
\begin{minipage}{0.69\textwidth}%
\centering
\includegraphics[width=0.7\textwidth]{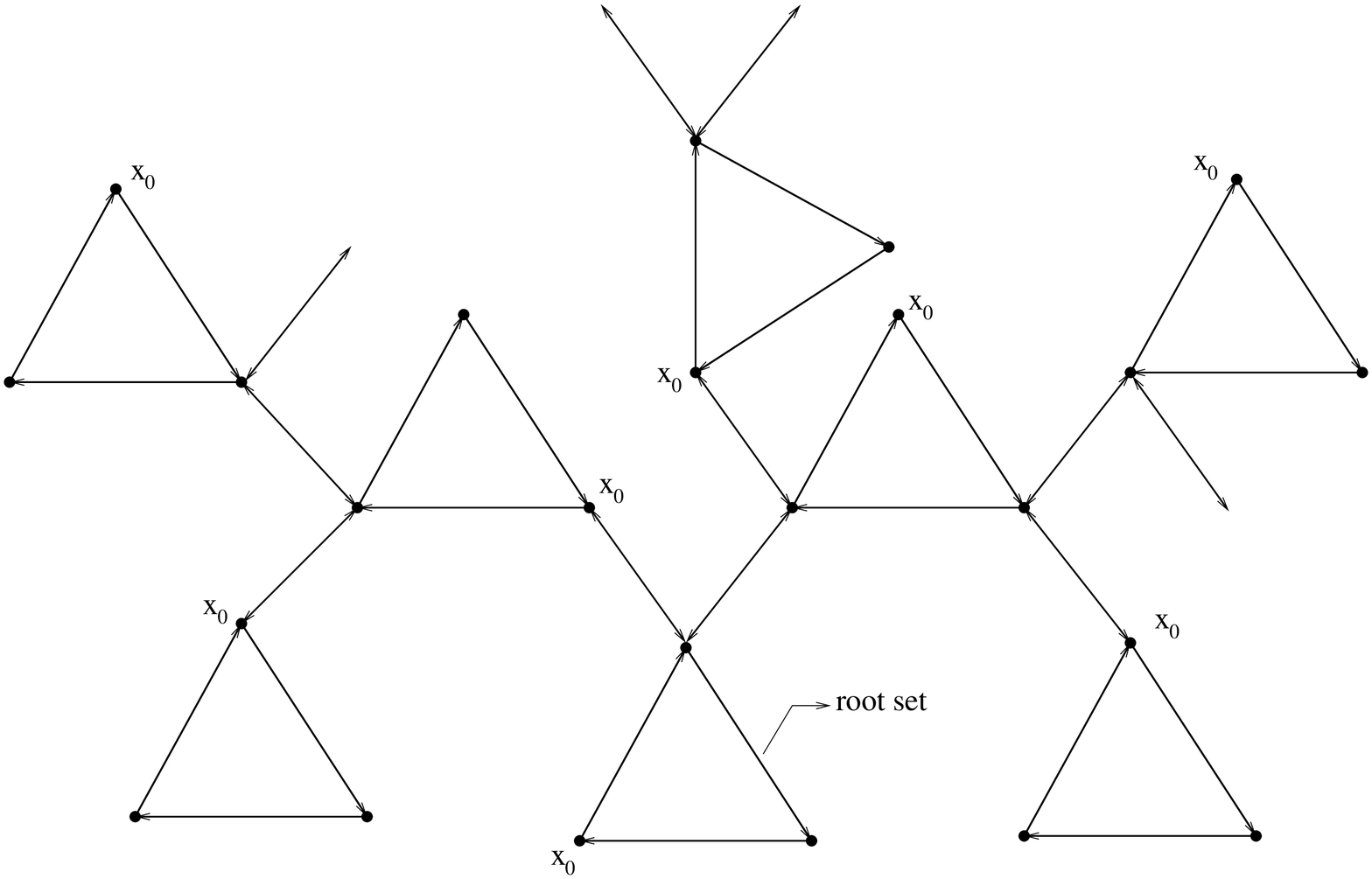}
\caption{The final periodic tree-like BRW.}%
\label{fig:5}%
\end{minipage}%
\end{figure}%

\subsection{Continuous-time branching process in varying environment}
\label{subsec:BPVE}

Consider a continuous-time
branching process where the breeding laws depend on the generation (while the death rate is always equal to 1);
this is called a \textit{branching process in varying environment} or \textit{BPVE}. 
To be precise, pick a sequence $\{k_n\}_{n \in \N}$ of strictly positive real numbers. The reproduction rate of a particle
of generation $n$ is $\lambda k_n$. By interpreting generations as space, the behavior of this process is
equivalent to the global behavior of a BRW on $\N$ where $k_{nm}=k_n$ if $n=m-1$ and $0$ otherwise.

We denote by $\lambda_w(n)$, as usual, the weak critical parameter of the associated BRW on $\N$ starting from $n$,
which is the only critical parameter for this process ($\lambda_s(n)=+\infty$ for all $n$ since 
there is clearly local extinction for every $n$).
Being the rates strictly positive, there is always a positive probability of reaching $n$ from $0$, hence $\lambda_w(0) \le \lambda_w(n)$.
On the
other hand in order to survive starting from $0$ the process has to pass by $n$ whence $\lambda_w(n) \le \lambda_w(0)$. Thus
$\lambda_w(n)=\lambda_w(0)$ for every $n \in \N$.
The generating function of the associated BRW is
\[
 G(\mathbf{q}|n)= \frac{1}{1+\lambda k_n (1-\mathbf{q}(n+1))}
\]
for all $\mathbf{q} \in [0,1]^\N$. Easy computations show that 
$K_w =\liminf_{n \to \infty} \sqrt[n]{\prod_{i=0}^{n-1} k_i}$. We are going to prove
in the next theorem that $\lambda_w=1/K_w$. We point out that in this case if $K_w=+\infty$ then $\lambda_w=0$ and there is survival
for every $\lambda>0$ (while it is always true that if $K_w=0$ then $\lambda_w=+\infty$).

\begin{teo}\label{th:continuousBPVA}
Let $\{X_t\}_{t \geq 0}$ be a continuous-time branching process in varying environment with reproduction rates $\{\lambda k_n\}_{n \in \mathbb{N}}$. 
Then $\lambda_w =1/{K_w}$.
\end{teo}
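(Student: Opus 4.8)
The plan is to reduce this to the general machinery already established, in particular the bound $\lambda_w \ge 1/K_w$ from Theorem~\ref{th:survival} together with the inequality $\lambda_w \le 1/K_w$, which I would obtain by exhibiting, for every $\lambda > 1/K_w$, a subprobability fixed point of the generating function supported on a half-line. Since the BPVE on $\N$ need not satisfy (U1) — the ratios $T_n^n = \prod_{i=0}^{n-1}k_i$ can oscillate wildly — Theorem~\ref{thm:main} does not apply directly, so the construction has to be done by hand. The inequality $\lambda_w(0) \ge 1/K_w$ is immediate from the last assertion of Theorem~\ref{th:survival}: if $\lambda < 1/K_w$ then $\liminf_n \lambda^n \prod_{i=0}^{n-1}k_i = 0$, forcing global extinction; here I am using $T_n^n = k^{(n)}_{0,\,n} = \prod_{i=0}^{n-1}k_i$ and the fact that from $0$ the only reachable vertex at generation $n$ is $n$ itself.

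For the reverse inequality, fix $\lambda > 1/K_w$, so that $\liminf_n \bigl(\lambda^n \prod_{i=0}^{n-1}k_i\bigr)^{1/n} > 1$; hence there is $c > 1$ and $n_0$ with $\lambda^n \prod_{i=0}^{n-1}k_i \ge c^n$ for all $n \ge n_0$. I want $\mathbf q \in [0,1]^\N$ with $\mathbf q(0) < 1$ and $G(\mathbf q) \le \mathbf q$. Using the explicit one-variable recursion $G(\mathbf q|n) = 1/(1+\lambda k_n(1-\mathbf q(n+1)))$, writing $\mathbf q(n) = 1 - s_n$ the condition $G(\mathbf q|n) \le \mathbf q(n)$ becomes $1 \le (1-s_n)(1+\lambda k_n s_{n+1})$, i.e. $s_n \le \lambda k_n s_{n+1}(1-s_n)$, which is implied by $s_n \le \lambda k_n s_{n+1}(1 - s_n)$ and in particular (since $1 - s_n \ge 1 - s_0$ once the sequence is decreasing) by a bound of the form $s_{n+1} \ge s_n / (\lambda k_n (1 - s_0))$. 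The natural ansatz is therefore $s_n := a\,\alpha^n / \prod_{i=0}^{n-1} k_i$ for a suitable small constant $a > 0$ and a base $\alpha$ with $1/\lambda < \alpha < 1$, perhaps combined with truncation: set $s_n$ as above for $n \ge$ some threshold and feed the recursion backwards to define $s_n$ for small $n$. One checks that with $\alpha$ chosen between $1/\lambda$ and $1$ the telescoping product gives $s_{n+1}/s_n = \alpha/k_n$, so $\lambda k_n s_{n+1}/s_n = \lambda\alpha > 1$, leaving room to absorb the factor $(1-s_n)$ provided $a$ is small enough that $s_n \le 1 - 1/(\lambda\alpha)$ for all $n$ — and $s_n \to 0$ along the $\liminf$ subsequence, with the remaining indices handled by the backward recursion (which only shrinks the values) or by taking $a$ small. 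This produces $\mathbf q = \mathbf 1 - (s_n)_n \in [0,1]^\N$ with $\mathbf q(0) = 1 - a < 1$ and $G(\mathbf q) \le \mathbf q$, so by Theorem~\ref{th:survival} there is global survival, giving $\lambda \ge \lambda_w$; letting $\lambda \downarrow 1/K_w$ yields $\lambda_w \le 1/K_w$.

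The main obstacle is the oscillation of $\prod_{i=0}^{n-1} k_i$: because only $\liminf (T_n^n)^{1/n}$, not $\lim$, is controlled, the sequence $s_n = a\alpha^n/\prod_{i<n}k_i$ need not be monotone or even bounded by $1$ at every index, so one cannot simply quote a clean subsolution. The remedy is either to run the recursion $s_n \mapsto s_{n+1}$ in the safe direction (forward, from a large index where $\lambda^n\prod_{i<n}k_i \ge c^n$ holds) and patch the finitely many small indices by hand using the backward step $s_n = \lambda k_n s_{n+1}(1-s_n)$, or to first dominate the BPVE by an auxiliary environment $\widehat k_n \le k_n$ for which the product does grow geometrically along all of $\N$ — monotonicity of $G$ in $K$ then transfers survival. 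Once the choice of $\alpha$ and $a$ is pinned down, the verification of $G(\mathbf q) \le \mathbf q$ coordinate-by-coordinate is a routine estimate of the type carried out in the proof of Theorem~\ref{thm:main}, and the conclusion $\lambda_w = 1/K_w$ follows by combining the two inequalities together with \eqref{eq:comparison}.
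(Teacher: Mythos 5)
Your strategy is the paper's own: combine the general bound $\lambda_w\ge 1/K_w$ with an explicit subsolution $\mathbf q=\mathbf 1-\mathbf s$, $s_n\propto\alpha^n/\prod_{i=0}^{n-1}k_i$, checked through the coordinatewise inequality $s_n\le\lambda k_n s_{n+1}(1-s_n)$ and Theorem~\ref{th:survival}(2). The genuine flaw is your range for the base: you prescribe $\alpha\in(1/\lambda,1)$, whereas the condition that makes the ansatz work is $\alpha\in(1/\lambda,K_w)$ (the paper's $\rho=1/\alpha\in(1/K_w,\lambda)$), and this interval is nonempty precisely because $\lambda>1/K_w$. As written, your choice can fail outright: if $K_w>1$ and $1/K_w<\lambda<1$ the interval $(1/\lambda,1)$ is empty; and if $K_w<1$ (e.g.\ $k_n\equiv 1/2$, so $K_w=1/2$), any $\alpha\ge K_w$ makes $s_n/a=\alpha^n/\prod_{i<n}k_i$ grow geometrically, so no $a>0$ keeps $s_n\le 1-1/(\lambda\alpha)$; the failure is along the whole tail, so neither ``patching finitely many small indices'' nor shrinking $a$ can rescue it.

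Once $\alpha<K_w$ is imposed, the ``main obstacle'' you describe evaporates and none of the truncation, backward patching, or auxiliary-environment machinery is needed: since $\liminf_n\bigl(\prod_{i<n}k_i\bigr)^{1/n}=K_w>\alpha$, for any $\beta\in(\alpha,K_w)$ one has $\prod_{i<n}k_i\ge\beta^n$ for all large $n$, hence $\alpha^n/\prod_{i<n}k_i\to 0$ and is bounded by some $M$; choosing $a\le\bigl(1-1/(\lambda\alpha)\bigr)/M$ yields $s_n\le 1-1/(\lambda\alpha)$ for every $n$ simultaneously, hence $G(\mathbf 1-\mathbf s)\le\mathbf 1-\mathbf s$ with $s_0=a>0$, which is exactly the paper's computation (there $t\le(1-\rho/\lambda)/M$ with $M$ a bound on $1/(\rho^n\prod_{i<n}k_i)$). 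The oscillation of the products is irrelevant because only an eventual lower bound is needed, and that is precisely what the liminf supplies. Your two proposed remedies are in any case not carried out: in particular, the domination by an environment $\widehat k_n\le k_n$ with geometrically growing products would itself require constructing $\widehat k$ with $\liminf_n\bigl(\prod_{i<n}\widehat k_i\bigr)^{1/n}>1/\lambda$, which you do not supply. So the idea is right and coincides with the paper's, but the proof as stated contains a step that fails and an unnecessary, unverified repair scheme in its place.
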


\begin{proof}
We study the survival of the branching process by analyzing its associated continuous-time BRW. 
Assume that $K_w \in (0,\infty]$. Since $\lambda_w \ge{1}/{K_w}$ for any BRW we just need to show that the 
reversed inequality holds. It follows from Theorem~\ref{th:survival}(2) that for this  purpose it suffices to find, for any 
$\lambda > 1/K_w$ (where $1/K_w=0$ if $K_w=+\infty$), a solution of $\lambda K{\bf{
v}} \geq {{\bf{v}}}/{(\bf{1}-{\bf{v}})}$ where 
${\bf{v}} \in [0,1]^\N$ 
such that ${\bf{v}} > {\bf{0}}$. Recall that $K{\bf{v}}(n) = k_n \mathbf{v}(n+1)$ for any $n$.
 Fix $\lambda > 1/K_w$ and choose 
$\rho \in (1/K_w,\lambda)$. 
Then define $\mathbf{v}(n)={t}/{(\rho^n \prod_{i=0}^{n-1} k_i)}$ where $t \le (1-\frac{\rho}{\lambda})/ M$ 
is fixed and $M$ is an 
upper bound of the sequence $\{{1}/{(\rho^n \prod_{i=0}^{n-1} k_i)}\}_{n \in \mathbb{N}}$ (whose existence is guaranteed by the fact that 
$\mathbf{v}(n)/t$ converges to $0$ as $n \rightarrow \infty$ by
our choice of $\rho$). Now
\begin{equation} 
\begin{split}
\lambda K{\bf{
v}}(n)&=\frac{\lambda k_n t}{\rho^{n+1} \prod_{i=0}^{n} k_i} 
= \frac{\lambda}{\rho} \cdot \frac{t}{\rho^n \prod_{i=0}^{n-1} k_i} 
\geq \frac{1}{1- tM}\cdot \frac{t}{\rho^n \prod_{i=0}^{n-1} k_i}\\
&\geq \frac{1}{1- t/(\rho^{n} \prod_{i=0}^{n-1} k_i)}\cdot \frac{t}{\rho^n \prod_{i=0}^{n-1} k_i}
=\frac{{\bf{v}}(n)}{1-{\bf{v}}(n)}.
\end{split}
\end{equation}
%
\end{proof}

\begin{rem}\label{rem:criticalbehaviour}
The identification of the critical parameter $\lambda_w=1/\liminf_{n \to \infty} \sqrt[n]{\prod_{i=0}^{n-1} k_i}$ does not tell us anything about the 
critical behavior when $\lambda=\lambda_w$. There is not just one possible scenario: in some cases there might be extinction
while in others there might be survival.

Indeed, suppose that $\lim_{n \to \infty} k_n=k \in (0,+\infty)$ and $k \ge k_n$ for every $n \ge n_0$ (for some $n_0 \in \N$). Then
by a simple coupling argument when $\lambda=\lambda_w=1/k$ the BRW starting from $n_0$ is stochastically bounded from above by
a BRW with rightward constant rate $1$ which is well-known to die out.

Conversely, consider $k_n:=(1+1/(n+1))^2$; then $\lambda_w=1$. Take $\lambda=1$
and define $\mathbf{v}(n):= 1/(n+2)$ for all $n \in \N$. We claim that $G(\mathbf{1}-\mathbf{v}) \le \mathbf{1}-\mathbf{v}$,
that is $\lambda K \mathbf{v} \ge \mathbf{v}/(\mathbf{1}-\mathbf{v})$;
indeed
\[
\begin{split}
\lambda K \mathbf{v}(n)-\frac{\mathbf{v}(n)}{1-\mathbf{v}(n)}& =k_n\mathbf{v}(n+1)-\frac{1/(n+2)}{1-1/(n+2)}
  =  \frac{(1+1/(n+1))^2}{n+3}-\frac{1}{n+1}\\
  &=\frac{1}{n+1} \Big ( \frac{(n+2)^2}{(n+1)(n+3)}-1 \Big )>0.
\end{split}
  \]
Hence by using $\mathbf{q}:=\mathbf{1}-\mathbf{v}$, according to Theorem~\ref{th:survival} there is global survival starting from any $n \in \N$.\end{rem}

\subsection{Other examples}
\label{subsec:otherex}

One of the main technical tools that we need in this section is 
the following theorem (see \cite[Theorem 2.2]{cf:BZ15}), which states that there cannot be a weak phase on slowly
growing BRWs. The proof, which we omit, makes use of equation~\eqref{eq:comparison} and the comment thereafter.

\begin{teo} \label{apgs}
Let $(X,K)$ be a continuous time, non-oriented BRW and let $x_0 \in X$. 
Suppose that there exists $\kappa \in (0,1]^X$ and $\{c_n\}_{n \in \mathbb{N}}$ such that, for all $n \in \mathbb{N}$ 
\begin{enumerate}
\item \label{apgs1} $\kappa(y)/\kappa(x_0) \leq c_n \ \forall y \in B(x_0,n)$
\item \label{apgs2} $\kappa(x)k_{x y} = \kappa(y) k_{y x} \ \forall x, y \in X$,
\end{enumerate}
where $B(x,n)$ is the ball of center $x$ and radius $n$ w.r.~to the natural distance of the graph 
$(X,E_K)$. If $\lim_{n\to\infty}c_n^{1/n} = 1$ and $\lim_{n\to\infty}|B(x_0,n)|^{1/n}= 1$, 
then $K_s(x_0,x_0) = K_w(x_0)$ 
and there is no pure global survival starting from $x_0$.
Moreover, in this case, $\liminf_{n\to\infty} \sqrt[n]{T_x^n}=\limsup_{n\to\infty} \sqrt[n]{T_x^n}$.
\end{teo}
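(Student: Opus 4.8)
The plan is to prove the one nontrivial inequality, $K_w(x_0)\le K_s(x_0,x_0)$; the reverse inequality $K_s(x_0,x_0)\le K_w(x_0)$ is already recorded in \eqref{eq:comparison}, so the two together give $K_s(x_0,x_0)=K_w(x_0)$, and then \eqref{eq:comparison} immediately upgrades this to $\lambda_w(x_0)=\lambda_s(x_0)=1/K_w(x_0)$, so that the weak‑survival window collapses. The conclusion ``no pure global survival'' is then essentially the statement that $\lambda_w(x_0)=\lambda_s(x_0)$; the borderline value $\lambda=\lambda_w(x_0)=\lambda_s(x_0)$ is covered by the almost sure extinction at $\lambda_s$ recalled earlier in the paper.

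For the key inequality I would first note that the detailed‑balance relation~\eqref{apgs2} lifts to the iterates: a one‑line induction on $n$ using $\kappa(x)k_{xw}=\kappa(w)k_{wx}$ and the associativity of matrix multiplication gives $\kappa(x)k^{(n)}_{xy}=\kappa(y)k^{(n)}_{yx}$ for all $n\in\N$ and all $x,y\in X$. The heart of the matter is then a Cauchy--Schwarz bound on the first moment. Since $k^{(n)}_{x_0y}>0$ forces a path of length $n$ from $x_0$ to $y$, the sum defining $T^n_{x_0}$ runs effectively over $B(x_0,n)$, whence
\[
T^n_{x_0}=\sum_{y\in B(x_0,n)}k^{(n)}_{x_0y}\le|B(x_0,n)|^{1/2}\Big(\sum_{y}\big(k^{(n)}_{x_0y}\big)^2\Big)^{1/2}.
\]
Rewriting one of the two factors in $\big(k^{(n)}_{x_0y}\big)^2$ via the reversibility of the iterates as $(\kappa(y)/\kappa(x_0))\,k^{(n)}_{yx_0}$, and bounding $\kappa(y)/\kappa(x_0)\le c_n$ on $B(x_0,n)$ by~\eqref{apgs1}, one obtains $\sum_y\big(k^{(n)}_{x_0y}\big)^2\le c_n\sum_y k^{(n)}_{x_0y}k^{(n)}_{yx_0}=c_n\,k^{(2n)}_{x_0x_0}$, hence $T^n_{x_0}\le\big(c_n\,|B(x_0,n)|\big)^{1/2}\big(k^{(2n)}_{x_0x_0}\big)^{1/2}$.

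Taking $n$‑th roots and then $\limsup_{n\to\infty}$, the hypotheses $c_n^{1/n}\to1$ and $|B(x_0,n)|^{1/n}\to1$ wipe out the first factor, leaving
\[
\limsup_{n\to\infty}\sqrt[n]{T^n_{x_0}}\le\limsup_{n\to\infty}\big(k^{(2n)}_{x_0x_0}\big)^{1/(2n)}\le\limsup_{n\to\infty}\big(k^{(n)}_{x_0x_0}\big)^{1/n}=K_s(x_0,x_0).
\]
Consequently $K_w(x_0)=\liminf_n\sqrt[n]{T^n_{x_0}}\le\limsup_n\sqrt[n]{T^n_{x_0}}\le K_s(x_0,x_0)\le K_w(x_0)$, so these four quantities coincide; this yields both $K_s(x_0,x_0)=K_w(x_0)$ and the existence of $\lim_n\sqrt[n]{T^n_{x_0}}$, and the same chain applies with $x_0$ replaced by any $x$ reachable from $x_0$, since~\eqref{apgs1} then holds at $x$ with $c_n$ multiplied by a constant depending only on $d(x_0,x)$, which is inert after extracting $n$‑th roots. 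I expect the only genuinely delicate step — the crux of the proof — to be exactly this pairing of the reversibility identity with Cauchy--Schwarz, which converts the first moment $T^n_{x_0}$ into the diagonal return weight $k^{(2n)}_{x_0x_0}$ and thereby lets the two subexponential geometric assumptions do all the work; the remainder is bookkeeping, and the passage from $K_s=K_w$ to the absence of a weak phase is immediate from \eqref{eq:comparison}.
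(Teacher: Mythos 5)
Your reversibility-plus-Cauchy--Schwarz computation is correct and is exactly the intended mechanism (the paper omits the proof, referring to \cite[Theorem 2.2]{cf:BZ15}, and only records that the existence of the limit follows from the even-step diagonal terms): the induction giving $\kappa(x)k^{(n)}_{xy}=\kappa(y)k^{(n)}_{yx}$, the bound $T^n_{x_0}\le\sqrt{c_n\,|B(x_0,n)|}\,\sqrt{k^{(2n)}_{x_0x_0}}$, and the two subexponential hypotheses yield $\limsup_n\sqrt[n]{T^n_{x_0}}\le K_s(x_0,x_0)\le K_w(x_0)$, hence $K_s(x_0,x_0)=K_w(x_0)$, the existence of $\lim_n\sqrt[n]{T^n_{x_0}}$, and, via \eqref{eq:comparison}, $\lambda_w(x_0)=\lambda_s(x_0)=1/K_w(x_0)$. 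Up to this point your argument is complete and matches the route the paper has in mind.

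There is, however, a genuine gap in your treatment of ``no pure global survival''. The result you invoke for the borderline value (``almost sure extinction at $\lambda_s$'') is a statement about \emph{local} extinction at $\lambda=\lambda_s$; far from covering the critical point, it supplies exactly the ``no local survival'' half of pure global survival there. Once $\lambda_w(x_0)=\lambda_s(x_0)=:\lambda_c$, the only value at which pure global survival could occur is $\lambda=\lambda_c$, and excluding it amounts to proving almost sure \emph{global} extinction at $\lambda_c$. This does not follow from what you have assembled: the necessary condition in Theorem~\ref{th:survival} is not violated at $\lambda_c$ (on $\Z^d$, for instance, $\lambda_c^nT^n_{x_0}\equiv1$ while critical global extinction nevertheless holds), the existence of a bounded $\mathbf v$ with $\lambda_c K\mathbf v\ge\mathbf v$ is likewise consistent with either behaviour, and the paper itself emphasizes that global survival at $\lambda=\lambda_w$ is possible in general (see \cite[Example 3]{cf:BZ2} and Remark~\ref{rem:criticalbehaviour}). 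So the critical-point assertion requires a separate argument, which is part of what the paper delegates to \cite[Theorem 2.2]{cf:BZ15}; as written, your proof establishes $K_s(x_0,x_0)=K_w(x_0)$, the existence of the limit and the collapse of the weak phase, but not the absence of pure global survival at $\lambda=\lambda_c$.
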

In the proof of \cite[Theorem 2.2]{cf:BZ15} it was not explicitly mentioned that the
$\lim_{n\to\infty} \sqrt[n]{T_x^n}$ exists, but it 
follows easily 
by noting that $\liminf_{n\to\infty} \sqrt[2n]{k_{x_0x_0}^{(2n)}}=\limsup_{n\to\infty} \sqrt[2n]{k_{x_0x_0}^{(2n)}}$.

The following is an example where condition (U1) is not satisfied, nevertheless $\lambda_w=1/K_w<\lambda_s$
where, in this case, $K_w=\lim_{n\to\infty} \sqrt[n]{T_x^n}$.

\begin{exmp}\label{exmp:treeandZ}
Consider the 
irreducible continuous-time BRW on the graph obtained by identifying each vertex
of $\mathbb T_d$ ($d \geq 3$) with the vertex 0 of a copy of $\mathbb{Z}$ (each vertex is attached to a different
copy of $\Z$)  and let the rates matrix be the adjacency matrix of the graph. 
Denote this BRW by $(X,K_1)$.
We claim that $(X,K_1)$ can be projected into a BRW on $\mathbb{Z}$ with the following rates. 
Let $K$ be defined by $k_{0 0}:=d, k_{n\, n+1} := 1 =: k_{n+1\, n}$ and $0$ otherwise. Denote this new BRW by $(\Z,K)$.

Since $k_{n m} =1$ for $n \neq m$ whenever $|m-n|=1$, we conclude that $\kappa(n)k_{n m} = \kappa(m) k_{m n} \ \forall n, 
m \in \mathbb{N}$ if and only if $\kappa(n)=1 \ \forall n$. Then condition (\ref{apgs2}) in Theorem~\ref{apgs} is satisfied. 
Condition (\ref{apgs1}) in Theorem~\ref{apgs} is satisfied for any choice of $n_0$ by taking $c_n = 1$ for any $n \in \mathbb{N}$. 
Since $c_n^{1/n}=1$ 
and $|B(n_0,n)|^{1/n} \rightarrow 1$ as $n \rightarrow +\infty$ for any choice of $n_0$, we 
conclude that 
$K_s(\Z,K) =K_w(\Z,K) \geq 1/d$, where $K_w(\Z,K)=\lim_{n\to\infty} \sqrt[n]{\sum_{y \in X} k_{n_0 y}^{(n)}}$ 
(the existence of the limit is guaranteed by Theorem~\ref{apgs}).
Since projecting a BRW does not modified the value of the critical weak parameter neither the value of $K_w$, 
we conclude that $\lambda_w(X,K_1)=\lambda_w(\Z,K)=1/K_w(\Z,K)=
1/K_w(X,K_1) \leq 1/d$.  

Clearly, by going along a copy of $\Z$ in $X$ at arbitrarily long distance from the junction with $\mathbb{T}_d$, we have
that (U1) is not satisfied; more precisely, if $x(n)$ is a point in a copy of $\Z$ in $X$ at distance $n$ from the junction with 
$\mathbb{T}_d$, then $\sqrt[n]{\sum_{y \in X} k^{(n)}(x(n),y)}=2<d-\varepsilon \le K_w(\Z,K)-\varepsilon$ for all $\varepsilon \in (0,1)$.

Finally, we recall that the critical strong parameter can change 
in a projection; indeed it is not difficult to see 
that $\lambda_s(X,K_1)=1/(2\sqrt{d})>1/d \ge \lambda_w(X,K_1)$ for all $d \ge 3$. 
This can be proven by using the characterization
$\lambda_s(X,K_1)=\max\{\lambda \colon \Phi(x,x|\lambda) \le 1\}$ 
where, by standard generating function computations 
(see equation~\eqref{eq:HTheta} or the proof of \cite[Lemma 1.24]{cf:Woess}),
\[
 \Phi(x,x|\lambda)=1-\frac{d-2}{d-1}\sqrt{1-4\lambda^2}-\frac{d}{2(d-1)}\sqrt{1-4\lambda^2d},
\]
$x$ being a vertex in the tree $\mathbb{T}_d$.
\end{exmp}

In the following example, condition (U1) is not satisfied, nevertheless $\lambda_w=1/K_w=\lambda_s$; as before
$K_w=\lim_{n\to\infty} \sqrt[n]{T_x^n}$.

\begin{exmp}\label{exmp:star}
 Consider the 
 BRW obtained by attaching $d$ copies of the graph $\N$ to a common origin $0$ and by defining
 the rates according to the adjacency matrix as in the previous example. As before, this BRW can be projected onto $\N$ but 
 we discuss this example without any projection.
 As before, the key for computing $\lambda_w$ is to observe that Theorem~\ref{apgs} applies by taking $\kappa(x)=c_n = 1$ for
 all $n \in \N$ and every vertex $x$. This implies $\lambda_w=1/K_w=1/K_s=\lambda_s$ and $K_w=\lim_{n\to\infty} \sqrt[n]{\sum_{y \in X} k_{n_0 y}^{(n)}}$.
 What we have to do now is to compute $K_s$; we can do that by using the same technique as before. In this case
 \[
  \Phi(0,0|\lambda)=d \frac{1-\sqrt{1-4\lambda^2}}{2}
 \]
which implies $1/K_s=\max\{\lambda \colon \Phi(0,0|\lambda) \le 1\}=\sqrt{1/d-1/d^2}=\sqrt{d-1}/d$. Thus $K_w=k_s=d/\sqrt{d-1}$; thus,
as before,
if $x(n)$ is a point in a copy of $\Z$ at distance $n$ from the origin 
then $\sqrt[n]{\sum_{y \in X_1} k^{(n)}(x(n),y)}=2<K_w-\varepsilon$ for all $\varepsilon \in (0,K_w-2)$. Whence
(U1) does not hold.
\end{exmp}
The following is an example of an irreducible BRW on $\mathbb{N}$,
where $\lambda_w = 1 / K_w$ and $\lim_{n \to \infty} \sqrt[n]{T_x^n}$ does not exist. The idea is to pick outgoing rates which are
either 1 or 2 (alternating long stretches of 1s and 2s in order to keep the sequence oscillating).
Then we add rates from each $n$ to 0, so that the BRW is irreducible. If these rates are small enough, their presence
will neither affect $\lambda_w$ nor $K_w$.

\begin{exmp}\label{exmp:backtotheorigin}
Consider the BRW on $X=\mathbb{N}$ with the following rates. Let $K$ be defined by $k_{n n+1} := k_n \ge \delta>0, k_{n 0}:=\eps_n$ and 
$0$ otherwise. 
%
Observe that $\{T_0^n/\delta^n\}_{n \in \N}$ is nondecreasing; indeed, since $k_{x}/\delta \ge 1$,
\[
 \frac{T_0^n}{\delta^n} = \sum_{x \in \N} \frac{k_{0x}^{(n)}}{\delta^n} \le \sum_{x \in \N} \frac{k_{0x}^{(n)}}{\delta^n}\cdot 
 \frac{k_{x}}{\delta} \le \sum_{y \in \N} \frac{k_{0y}^{(n+1)}}{\delta^{n+1}}\le \frac{T_0^{n+1}}{\delta^{n+1}} .
\]
Choose the
sequence $\{\eps_n\}_{n \in \mathbb{N}}$ in such a way that $\beta := \sum_{i=0}^{+\infty} \eps_i (\prod_{j=0}^{i-1} k_j)/\delta^{i+1} < 1$. 
Since $T_0^0:= 1$ and
\begin{equation*}
\frac{T_0^n}{\delta^n} =  \prod_{j=0}^{n-1} \frac{k_j}{\delta} + 
 \sum_{i=0}^{n-2} \frac{\eps_i}{\delta} \Big (\prod_{j=0}^{i-1} \frac{k_j}{\delta} \Big ) \ \frac{T_0^{n-i-1}}{\delta^{n-i-1}} 
\le
 \prod_{j=0}^{n-1} \frac{k_j}{\delta} + 
 \sum_{i=0}^{n-2} \frac{\eps_i}{\delta} \Big (\prod_{j=0}^{i-1} \frac{k_j}{\delta} \Big ) \ \frac{T_0^{n}}{\delta^{n}}
\end{equation*}
we get
\begin{equation}
\prod_{j=0}^{n-1} k_j \leq T_0^n \leq \frac{\prod_{j=0}^{n-1} k_j}{1 - \beta} . 
\end{equation}
Therefore
\begin{equation}
K_w=\liminf_{n\to\infty} \left(T_0^n\right)^{1/n} = \liminf_{n\to\infty} \left(\prod_{j=0}^{n-1} k_j\right)^{1/n}.
\end{equation}

A straightforward computation shows that $\mathbf{v}(n):=1/ (\lambda^n \prod_{j=0}^{n-1}k_j)$ is a solution in $l^{\infty}(\mathbb{N})$ with $\mathbf{v} > 0$ of
$\lambda K \mathbf{v} \geq \mathbf{v}$ whenever 
$\lambda > 1/K_w$; equation~\eqref{eq:characterizationlambdaw} yields $\lambda_w \leq 1/K_w$.
Since $\lambda_w \geq 1/K_w$, we may conclude that $\lambda_w=1/K_w$. In \cite{cf:BZ2}, the following choice for the sequence 
$\{k_n\}_{n \in \mathbb{N}}$ is made. Define 
$a_n:=\lceil \log2/\log(1+1/n)\rceil, b_n:=\lceil \log2/(\log2 \log(2-1/n)) \rceil$ and $\{c_n\}_{n \in \mathbb{N}}$ recursively by
$c_1, c_{2r}=a_{2r}c_{2r-1}, c_{2r+1}=b_{2r+1}c_{2r}$ for any $r \geq 1$. Let $k_i := 1$ if $i \in (c_{2r-1},c_{2r}]$ (for some $r \in \mathbb{N}$)
and $k_i = 2$ if $i \in (c_{2r},c_{2r-1}]$ (for some $r \in \mathbb{N}$). It follows from this definition that
$\liminf_{n\to\infty} \left(\prod_{j=0}^{n-1} k_j\right)^{1/n}=1$ and that $\limsup_{n\to\infty} \left(\prod_{j=0}^{n-1} k_j\right)^{1/n}=2$. Therefore,
$\liminf_{n\to\infty} \left(T_0^n\right)^{1/n} = 1$ 
(note that in this last explicit example $\sum_{y \in X} k^{(n)}_{xy} \in [1,2^n]$ for all
$x \in X$ and $n\ge 1$, thus Theorem~\ref{thm:main} applies).
\end{exmp}

In the following example we have an irreducible BRW where $\lambda_w> 1/K_w$; moreover, we also provide an example of a reducible
BRW where $\lambda_w(x)> 1/K_w(x)$ and $\lambda_w(y)=1/K_w(y)$ for some $x,y \in X$ (nevertheless
 $\lambda_w(x)=\lambda_w(y)$ for all $x,y \in X$).
\begin{exmp}\label{exmp:finally}
To avoid confusion in the notation, in this example we denote the rates by $k_{x,y}$ instead of $k_{xy}$.
For simplicity we start with a reducible 
BRW on $\Z$: for all $n \in \N$ we take $k_{n,n+1} \in \{1,2\}$ as in Example~\ref{exmp:backtotheorigin} in such a way
 that $\liminf_{n \to \infty} \sqrt[n]{\prod_{i=0}^{n-1} k_{i,i+1}}=1<2= \limsup_{n \to \infty} \sqrt[n]{\prod_{i=0}^{n-1} k_{i,i+1}}$, while
 $k_{-n,-n-1}:=3-k_{n,n+1}$ for all $n \in \N$. Note that  $k_{{-}n,{-}n-1} k_{n,n+1}=2$ for all $n \in \N$;
 thus 
 \[
  \prod_{i=0}^{n-1} k_{-i,-i-1}=\frac{2^n}{\prod_{i=0}^{n-1} k_{i,i+1}}.
 \]
Hence, $\liminf_{n \to \infty} \sqrt[n]{\prod_{i=0}^{n-1} k_{-i,-i-1}}=1<2= \limsup_{n \to \infty} \sqrt[n]{\prod_{i=0}^{n-1} k_{-i,-i-1}}$. 
 Applying Theorem~\ref{th:continuousBPVA} to the process restricted to $\N$ and to the process restricted to $-\N:=\{-n \colon n \in \N\}$,
 we have that $\lambda_w(n)=1=1/K_w(n)$ for all $n \neq 0$.
 According to Remark~\ref{rem:lambdaw}, on the one hand $\lambda_w(0) \le \lambda_w(1)$ (since $0 \to 1$), 
 on the other hand (by taking $A:=\Z\setminus\{0\}$ in Remark~\ref{rem:lambdaw}) $\lambda_w(0) \ge \lambda_w(1)$; hence $\lambda_w(0)=1$.
 In order to compute $K_w(0)$ we note that 
\[
\sum_{i \in \Z} k^{(n)}_{0,i}=\prod_{i=0}^{n-1} k_{i,i+1}+\prod_{i=0}^{n-1} k_{-i,-i-1}=\prod_{i=0}^{n-1} k_{i,i+1} +\frac{2^n}{\prod_{i=0}^{n-1} k_{i,i+1}} 
\ge 2^{1+n/2},
\]
whence $K_w(0)=\liminf_{n \to \infty} \sqrt[n]{\sum_{i \in \Z} k^{(n)}_{0,i}} \ge \sqrt{2}$.
This implies that $\lambda_w(0) > 1/\sqrt{2} \ge 1/K_w(0)$; thus we have a reducible example where $\lambda_w(0)>1/K_w(0)$
and $\lambda_w(n)=1/K_w(n)$ for all $n\ge1$.

Let us modify this BRW to make it irreducible, as we did in Example~\ref{exmp:backtotheorigin}.
We add $k_{n,0}:=\eps_n$, such that 
$\beta_+ := \sum_{i=0}^{+\infty} \eps_i (\prod_{j=0}^{i-1} k_{j,j+1}) < 1/3$ and 
$\beta_- := \sum_{i=0}^{+\infty} \eps_{-i} (\prod_{j=0}^{i-1} k_{-j,-j-1}) < 1/3$.
Note that, as  in Example~\ref{exmp:backtotheorigin}, $\{T_0^n\}_{n \in \N}$ is nondecreasing (in this case $\delta=1$);
moreover $T_0^0=1$ and
\begin{equation*}
 \begin{split}
   {T_0^n} &=  \prod_{j=0}^{n-1} k_{j,j+1} + \prod_{j=0}^{n-1} k_{-j,-j-1}+
 \sum_{i=0}^{n-2} \Big [ \eps_i \Big (\prod_{j=0}^{i-1} k_{j,j+1} \Big )+\eps_{-i} \Big (\prod_{j=0}^{i-1} k_{-j,-j-1} \Big ) \Big ]\ T_0^{n-i-1} \\
  &\le 
  \prod_{j=0}^{n-1} k_{j,j+1} + \prod_{j=0}^{n-1} k_{-j,-j-1}+
  \sum_{i=0}^{n-2} \Big [ \eps_i \Big (\prod_{j=0}^{i-1} k_{j,j+1} \Big )+\eps_{-i} \Big (\prod_{j=0}^{i-1} k_{-j,-j-1} \Big ) \Big ]\ T_0^{n}.
 \end{split}
\end{equation*}
Hence
\[
 \prod_{j=0}^{n-1} k_{j,j+1} + \prod_{j=0}^{n-1} k_{-j,-j-1} \le
 {T_0^n}
 \le
 \frac{\prod_{j=0}^{n-1} k_{j,j+1} + \prod_{j=0}^{n-1} k_{-j,-j-1}}{1-\beta_+-\beta_-},
\]
and $K_w=\liminf_{n \to \infty} \Big (\prod_{j=0}^{n-1} k_{j,j+1} + \prod_{j=0}^{n-1} k_{-j,-j-1} \Big )^{1/n}\ge \sqrt2$.
We prove that $\lambda_w:=\lambda_w(\Z,K)=1$; indeed, suppose, by contradiction, that $\lambda_w<1$. 
For any fixed $\lambda \in (\lambda_w,1)$,
equation~\eqref{eq:characterizationlambdaw}
guarantees the existence $\mathbf{v} \in l^\infty(\Z)$ such that $\mathbf{v}(0)>0$ and 
$\lambda K \mathbf{v} \ge \mathbf{v}$.
In particular $\mathbf{v}_+$ (defined by $\mathbf{v}_+(n):=\mathbf{v}(n)$ for all $n \in \N$) satisfies 
$\lambda K^+ \mathbf{v}_+ \ge \mathbf{v}_+$ where $k^+_{0,1}=k_{0,1}+(3-k_{0,1}) \mathbf{v}(-1)/\mathbf{v}(1)$ and 
$k^+_{i,j}=k_{i,j}$ for $(i,j) \in \N^2\setminus\{(0,1)\}$;
this would imply $\lambda_w(\N,K^+) < 1$. Similarly,  
$\mathbf{v}_-$ (defined by $\mathbf{v}_-(n):=\mathbf{v}(-n)$ for all $n \in \N$) satisfies 
$\lambda K^- \mathbf{v}_- \ge \mathbf{v}_-$ where $k^-_{0,1}=k_{0,-1}+(3-k_{0,-1}) \mathbf{v}(1)/\mathbf{v}(-1)$ and 
$k^-_{i,j}=k_{-i,-j}$  for $(i,j) \in \N^2\setminus\{(0,1)\}$;
this would imply $\lambda_w(\N,K^-) < 1$. 
Note that $\min(k^+_{0,1},k^-_{0,1})\le 3$. Suppose, without loss of generality that $k^+_{0,1}\le 3$.
In this case, $\sum_{i=0}^{+\infty} \eps_i (\prod_{j=0}^{i-1} k^+_{j,j+1}) \le (3/k_{0,1}) \sum_{i=0}^{+\infty} \eps_i (\prod_{j=0}^{i-1} k_{j,j+1})<1$,
whence, by using the same above arguments, $K_w(\N,K^+)=\liminf_{n \to \infty} \sqrt[n]{\prod_{j=0}^{n-1} k^+_{j,j+1}}=1$; thus
$K_w(\N,K^+)=1>\lambda_w(\N,K^+)$ and this is a contradiction (as a consequence of Theorem~\ref{th:survival}).
\end{exmp}

\subsection{Random graphs}\label{subsec:randomgraphs}

In this section we give some ideas of what can happen in the case of a BRW on a random graph. There are examples of connected random graphs $\mathcal{G}$ where
$\lambda_w(\mathcal{G})$, $K_w(\mathcal{G})$ and $K_s(\mathcal{G})$ (hence $\lambda_s(\mathcal{G})$) are a.s.~constant. Take for instance 
a supercritical Bernoulli bond-percolation on $\Z^d$ and let $\mathcal{G}$ be the unique infinite cluster $C_\infty$. Consider now the BRWs on 
$\Z^d$ and $C_\infty$ where $K$ is the adjacency matrix. According to \cite[Theorem 1.1]{cf:BZ15}, $\lambda_s(C_\infty)=\lambda_s(\Z^d)$ and
 $K_s(C_\infty)=K_s(\Z^d)$ a.s.~(see also \cite[Section 7]{cf:BZ4}). This result is based on the fact that, in almost every realization of $C_\infty$, it is
possible to find arbitrarily large boxes of open edges and the parameter $K_s$ of the adjacency matrix of a box converges to $K_s(\Z^d)$ when the size
of the box goes to infinity (see \cite[Lemma 2.3]{cf:BZ15}, \cite[Section 3]{cf:BZ3} or \cite[Theorems 5.1 and 5.2]{cf:Z1}).
Analogously, according to \cite[Theorem 1.2]{cf:BZ15}, $\lambda_w(C_\infty)=\lambda_w(\Z^d)=1/K_w(C_\infty)=1/K_w(\Z^d)$ almost surely.

This might not be the case for a finite connected random graph $\mathcal{G}$; indeed, it is often possible to find
two (deterministic) graphs $G_1$ and $G_2$ such that $\pr(\mathcal{G}=G_i)>0$ (for $i=1,2$) and $K_s(G_1)=K_w(G_1)\neq K_w(G_2)=K_s(G_2)$
(which implies easily $\lambda_w(G_1)\neq \lambda_w(G_2)$ and $\lambda_s(G_1)\neq \lambda_s(G_2)$).
For an explicit example, consider a Bernoulli bond-percolation on the complete graph $\mathbb{K}_n$ with $n \ge 3$.
Given two different vertices $x,y \in \mathbb{K}_n$, there is a positive probability that the only (non-oriented) open edge in $\mathcal{G}$
is $(x,y)$; in this case $\lambda_s(\mathcal{G})=\lambda_w(\mathcal{G})=1/K_w(\mathcal{G})=1$.
On the other hand, given three different vertices $x,y,z \in \mathbb{K}_n$, there is a positive probability that the only (non-oriented) open edges in $\mathcal{G}$
are $(x,y)$ and $(y,z)$; in this case $\lambda_s(\mathcal{G})=\lambda_w(\mathcal{G})=1/K_w(\mathcal{G})=1/\sqrt{2}$.

\section*{Acknowledgements}

The first and third authors acknowledge support from INDAM-GNAMPA (Istituto Nazionale di Alta Matematica).
The second author thanks FAPESP (Grant 2015/20110-0) for financial support. This work was carried out during a stay of the second author at  
Dipartimento di Matematica, Politecnico di Milano. He is grateful for 
the hospitality and support. 
The authors are grateful to the anonymous referee for useful comments and suggestions.


\begin{thebibliography}{99}

\bibitem{cf:Adcock} 
A.B.~Adcock, B.D.~Sullivan, M.W.~Mahoney, {Tree-like structure in large social and information networks}, Proc. IEEE ICDM 2013, 1--10.

\bibitem{cf:BRZ16}
D.~Bertacchi, P.~M.~Rodriguez, F.~Zucca,
Galton-Watson processes in varying environment and accessibility percolation, 
preprint, arXiv:1611.03286.


\bibitem{cf:BZ}
D.~Bertacchi, F.~Zucca,
Critical behaviours and critical values of branching random walks
on multigraphs, J.~Appl.~Probab.~\textbf{45} (2008), 481--497.

\bibitem{cf:BZ2}
D.~Bertacchi, F.~Zucca,
Characterization of the critical values of branching random walks on
weighted graphs through infinite-type branching processes,
J.~Stat.~Phys.~\textbf{134} n.1 (2009), 53--65.

\bibitem{cf:BZ3}
D.~Bertacchi, F.~Zucca,
Approximating critical parameters of branching random walks,
J.~Appl.~Prob.~\textbf{46} (2009), 463--478.

\bibitem{cf:BZ4}
D.~Bertacchi, F.~Zucca,
Recent results on branching random walks, 
Statistical Mechanics and Random Walks: Principles, Processes and Applications, 
Nova Science Publishers (2012), 289-340.

\bibitem{cf:BZ14-SLS}
D.~Bertacchi, F.~Zucca,
Strong local survival of branching random walks is not monotone,
Adv.~Appl.~Probab.~\textbf{46} n.2 (2014), 400--421.

\bibitem{cf:BZ15}
D.~Bertacchi, F.~Zucca,
Branching random walks and multi-type contact-processes on the percolation cluster of $\Z^d$,
Ann.~Appl.~Probab.~\textbf{25} n.4 (2015), 1993--2012.

\bibitem{cf:BZ16}
D.~Bertacchi, F.~Zucca,
A generating function approach for branching random walks,
Brazilian J.~Prob.~\textbf{31} n.2 (2017), 229--253.

\bibitem{cf:Bramson92}
M.~Bramson,P.~Ney, J.~Tao, 
The population composition of a multitype branching random walk,
Ann.~Appl.~Probab.~\textbf{2} n.3 (1992),  575--596. 


\bibitem{cf:Chen} 
W.~Chen, Fang, W.; Hu, G.; M.W.~Mahoney,
{On the hyperbolicity of small-world and tree-like random graphs},
Lecture Notes Comp.~Sci.~{\textbf 7676}, 278--288, Springer-Verlag Berlin Heidelberg 2012.

\bibitem{cf:denHollMensh}
F.~den Hollander, M.V.~Menshikov, S.Yu.~Popov,  
 A note on transience versus recurrence for a branching random walk in random environment,
 J.~Statist.~Phys.~\textbf{95} n.3-4 (1999), 587--614. 

\bibitem{cf:GW1875}
F.~Galton, H.W.~Watson, On the probability of the extinction of
families, Journal of the Anthropological Institute of Great Britain and Ireland \textbf{4} (1875), 138--144.
 

  \bibitem{cf:Hautp12}
S.~Hautphenne,
Extinction probabilities of supercritical decomposable branching processes,
J.~Appl.~Probab.~\textbf{49} n.3 (2012),
 639--651.
 
  \bibitem{cf:Hautp13}
 S.~Hautphenne, G.~Latouche, G.~Nguyen,
 Extinction probabilities of branching processes with countably infinitely many types,
 Adv.~Appl.~Probab.~\textbf{45} n.4 (2013), 1068--1082.
 
 \bibitem{cf:HueLal2000}
 I.~Hueter, S.P.~Lalley, 
 Anisotropic branching random walks on homogeneous trees,
 Probab.~Theory Related Fields \textbf{116} n.1 (2000), 57--88.
 
 \bibitem{cf:MachadoMenshikovPopov}
F.P.~Machado, M.~V.~Menshikov, S.Yu.~Popov,
Recurrence and transience of multitype
branching random walks,
Stoch.~Proc.~Appl.~\textbf{91} (2001), 21--37.

\bibitem{cf:MP03}
F.P.~Machado, S.Yu.~Popov,
Branching random walk in random environment on trees,
Stoch.~Proc.~Appl.~\textbf{106} (2003), 95--106. 

\bibitem{cf:MadrasSchi}
N.~Madras, R.~Schinazi, {Branching random walks on trees},
Stoch.~Proc.~Appl.~\textbf{42} n.2 (1992),  255--267.

\bibitem{cf:Muller08}
S.~M\"uller, A criterion for transience of multidimensional branching 
random walk in random environment, Electron.~J.~Probab.~\textbf{13} n.41
(2008), 1189--1202. 


 \bibitem{cf:PemStac1}
R.~Pemantle, A.M.~Stacey, {The branching random walk and
contact process on Galton--Watson and nonhomogeneous trees},
Ann.~Prob.~\textbf{29} n.4 (2001),
 1563--1590.
 
\bibitem{cf:spakulova}
I.~\v Spakulov\'a,
Critical percolation of virtually free groups and other tree-like graphs,
    Ann.~Probab.~\textbf{37} n.6 (2009), 
  2262--2296.

\bibitem{cf:Stacey03}
A.M.~Stacey, {Branching random walks on quasi-transitive graphs},
Combin.~Probab.~Comput.~\textbf{12} n.3 (2003), 
  345--358.

\bibitem{cf:Su2014}  
W.~Su,
 Branching random walks and contact processes on Galton-Watson trees,
 Electron.~J.~Probab.~\textbf{19} n.41
(2014), 12 pp.
 
\bibitem{cf:Woess}
        W.~Woess, {Random walks on infinite graphs and groups},
        Cambridge Tracts in Mathematics, {\textbf 138},
    Cambridge Univ.~Press, 2000.
 
\bibitem{cf:Z1}
F.~Zucca,
Survival, extinction and approximation of discrete-time branching random walks,
J.~Stat.~Phys., \textbf{142} n.4 (2011), 726--753.

\end{thebibliography}
\end{document}